\newcounter{EQNR}
 \theoremstyle{plain}
 \newtheorem{thm}{Theorem}[section]
 \numberwithin{equation}{section} 
 \theoremstyle{plain}
 \theoremstyle{plain}
 \theoremstyle{definition}
 \newtheorem{defn}[thm]{Definition}
 \theoremstyle{plain}
 \newtheorem{prop}[thm]{Proposition}
 \newtheorem{rem}[thm]{Remark}
 \newtheorem{lem}[thm]{Lemma}
 \newtheorem{cor}[thm]{Corollary}
 \newtheorem*{cor*}{Corollary}
 \newtheorem*{conj*}{Conjecture}
 \newtheorem*{thm*}{Theorem}
\newcommand{\bl}{\begin{lem}}
\newcommand{\el}{\end{lem}}
\newcommand{\bml}{\begin{multline}}
\newcommand{\eml}{\end{multline}}
\newcommand{\beq}{\begin{equation}}
\newcommand{\eeq}{\end{equation}}
\newcommand{\bp}{\begin{prop}}
\newcommand{\ep}{\end{prop}}
\newcommand{\bd}{\begin{defn}}
\newcommand{\ed}{\end{defn}}
\newcommand{\pf}{\begin{proof}}
\newcommand{\epf}{\end{proof}}
\newcommand{\field}[1]{\ensuremath{\mathbb{#1}}}
\newcommand{\CC}{\field{C}}
\newcommand{\NN}{\field{N}}
\newcommand{\HH}{\field{H}}
\newcommand{\RR}{\field{R}}
\newcommand{\ZZ}{\field{Z}}
\let\Im\relax
\DeclareMathOperator{\Im}{Im}
\let\Re\relax
\DeclareMathOperator{\Re}{Re}
\DeclareMathOperator{\vol}{vol}
\DeclareMathOperator{\GL}{GL}
\DeclareMathOperator{\tr}{tr}
\newcommand{\hil}{\mathcal{H}}
\DeclareMathOperator{\lp}{\Delta}
\newcommand{\csp}{\textbf{c}}
\newcommand{\kv}{\mathbf{k}}
\newcommand{\elp}{\textbf{e}}
\newcommand{\mm}{\mathfrak{a}}
\newcommand{\Z}{\mathcal{Z}}
\begin{document}

\title{Super-zeta functions and regularized determinants associated to cofinite Fuchsian groups with finite-dimensional unitary representations}
\author{Joshua S. Friedman\footnote{The views expressed in this article are the author's own and not those of the U.S. Merchant Marine Academy,
the Maritime Administration, the Department of Transportation, or the United States government.}, Jay Jorgenson\footnote{Research supported
by several PSC-CUNY grants.} and Lejla Smajlovi\'{c}}

\maketitle

\begin{abstract}\noindent
Let $M$ be a finite volume, non-compact hyperbolic Riemann surface, possibly with elliptic fixed points, and let $\chi$
denote a finite dimensional unitary representation of the fundamental group of $M$.  Let $\Delta$ denote the hyperbolic
Laplacian which acts on smooth sections of the flat bundle over $M$ associated to $\chi$.  From the spectral theory
of $\Delta$, there are three distinct sequences of numbers:  The first coming from the eigenvalues
of $L^{2}$ eigenfunctions, the second coming from resonances associated to the continuous spectrum, and the third being the set of negative integers.  Using these
sequences of spectral data, we employ the super-zeta approach to regularization and introduce two super-zeta functions, $\Z_-(s,z)$ and $\Z_+(s,z)$ that encode the spectrum of $\Delta$ in such a way that they can be used to define the regularized determinant of $\Delta-z(1-z)I$. The resulting formula for the regularized determinant of $\Delta-z(1-z)I$ in terms of the Selberg zeta function, see Theorem 5.3, encodes the symmetry $z\leftrightarrow 1-z$, which could not be seen in previous works, due to a different definition of the regularized determinant.   \end{abstract}

\section{Introduction}

In this article we will develop the super-zeta regularization approach to the spectral data associated to Laplacians which act
on smooth sections of flat vector bundles on a finite volume Riemann surface $M$.  In brief, we will use the super-zeta methodology to define
two functions, $\Z_-(s,z)$ and $\Z_+(s,z)$, carrying the information about the spectrum of the Laplacian in such a way that it makes natural to define regularized determinant of $\Delta - z(1-z)I$ by formula \eqref{detLaplSq} below. We obtain expressions for the Selberg zeta function and the scattering determinant associated to $M$ in terms of $\Z_-(s,z)$ and $\Z_+(s,z)$.  In some sense, our main results complete by different means, the problem studied in \cite{Ef88-91} where the author employed a trace formula approach
to zeta regularization.

If a given sequence $\Lambda =\{\lambda_{j}\}$ is such that $\lambda_{j}$ approaches one sufficiently fast, then it is
an elementary exercise to define the product of elements of $\Lambda$; no regularization is needed.  However, regularization provides a means
by which one can define a product and study its properties when $\Lambda$ is unbounded.
In its most naive interpretation, the regularized product of a sequence, when defined, allows one to write mathematical
expressions of the form ``$\infty ! = \sqrt{2\pi}$'', which is often viewed as amusing when first encountered.  However, upon further reflection,
one views the regularized product of a sequence $\Lambda = \{\lambda_{j}\}$ of numbers to be a special value of the
zeta function $\zeta_{\Lambda}(s) = \sum\lambda_{j}^{-s}$, namely the value $\exp(-\zeta_{\Lambda}'(0))$.  In this form, one
can view the definition of a regularized product as yielding an area of investigation which includes complex analysis, meaning
the study of the function $\zeta_{\Lambda}(s)$ of a complex variable $s \in \mathbb{C}$, as well as a type of analytic number theory,
as it pertains to special values of meromorphic functions.  From this point of view, there exists in the literature
various references which develop the elementary study of regularized products and go so far as to include some interesting
examples; see, for example, \cite{JL93b}.

A somewhat commonplace example for a sequence $\Lambda$ is the set $\{z - \lambda_{j}\}$ where $z$ is a complex variable
and $\{\lambda_{j}\}$ is the sequence of eigenvalues of a self-adjoint operator, such as the Laplacian which acts on smooth
functions on a compact hyperbolic Riemann surface.  In this particular setting, it was shown in \cite{Sarnak} that the
zeta regularized product of $\Lambda$ is closely related to the Selberg zeta function.  Further examples are discussed in
the well-cited article \cite{Hawking77}.  As Hawking discusses on page 141 of \cite{Hawking77}, it is common to develop
a means of regularization by starting with the trace of a heat kernel.  Unfortunately, there are many instances when
such heat kernels are not of trace class, such as when the hyperbolic Riemann surfaces has finite volume yet is not compact.
In recognition of this problem the author of \cite{El12} writes the following on page 7:

\vskip .05in
\it
Notice that the generalization to the case of a continuous spectrum is quite simple (the multi-series being just substituted by a multiple
integral). \rm

\vskip .05in
Respectfully, we disagree with this assertion that zeta regularization is simple in the presence of continuous
spectrum.  Indeed, in \cite{Ef88-91} the author studied in the problem of expressing the Selberg zeta function
as a regularized product, following the approach of \cite{Sarnak}, which employed the Selberg trace formula,
and the concluding result from \cite{Ef88-91} was not entirely successful.  From different studies, various authors
have succeeded in defining \it regularized traces \rm of heat kernels in the setting on finite volume hyperbolic
Riemann surfaces, see for example \cite{JLund97b}, \cite{Mull98} and \cite{MulMul06}, and from these results one is
able to proceed further in developing zeta regularized products.  However, in doing so, one does not see
very clearly the underlying sequence $\Lambda$ since all analytic consideration has been swept into the study of
regularizing the traces of operators.

In the present article we revisit the problem of defining and studying regularized product of the Laplace operator
which acts on the space of smooth sections of flat vector bundles which lie over a finite volume hyperbolic Riemann
surface. Specifically, let $\Gamma$ be a Fuchsian group of the first kind with $\csp$ cusps, and assume $\csp > 0$.  Let $M=\Gamma \backslash \HH$ be the
finite volume, non-compact orbifold quotient space.  Let $\chi$ be finite-dimensional unitary representation of $\Gamma$.  The Laplacian $\Delta$ on $M$,
besides the discrete spectrum, possesses the continuous spectrum with finite multiplicity, which is described through the resonances, meaning the poles of the scattering
matrix.  We recall that the Phillips-Sarnak philosophy \cite{PS92} asserts that for a generic surface there might be no non-trivial $L^{2}$ eigenfunctions; therefore,
any general definition of a determinant of the Laplacian must not solely use eigenvalues of $L^{2}$ eigenfunctions, as is developed in
\cite{Ko91p1} for the special case when $\Gamma$ is arithmetic.

The main purpose of this paper is to undertake a different point of view for regularization which is based on the super-zeta regularization
approach as developed by Voros in \cite{VorosKnjiga}.  Specifically, we define the
square of the (super-zeta regularized) determinant of $\Delta - z(1-z)I$ in such a way that it includes both discrete eigenvalues and resonances.
Furthermore, our expression for $\mathrm{det}^2(\Delta-z(1-z)I)$ encodes the symmetry $z\leftrightarrow 1-z$, which seems necessary based on the
notation but in fact is not true when using a regularized heat trace or Selberg trace formula approach; see \cite{Ef88-91}.

We define two completed zeta functions $Z_+$ and $Z_-$.  The set of zeros $N(Z_+)$  of $Z_+$ contains exactly the non-trivial zeros of the
Selberg zeta function on $M$ stemming from the discrete eigenvalues of the Laplacian and the resonances, while the set of zeros of $Z_-$ is $1-N(Z_+)$.
We then define two super-zeta functions, $\Z_+(s,z)$ and $\Z_-(s,z)$ associated to $Z_+$ and $Z_-$, show that
$\Z_+(s,z)$ and $\Z_-(s,z)$  each  possess a meromorphic continuation to the whole $s$-plane, taking into account
only certain admissible values of $z$.  In addition, the continuations of $\Z_+(s,z)$ and $\Z_-(s,z)$ each are regular at $s=0$, so then
we can define
\begin{equation} \label{detLaplSq}
\mathrm{det}^2(\Delta-z(1-z)I)= \exp\left( {-\frac{d}{ds}\left.\left( \Z_{+}\left( s,z\right)+\Z_{-}\left( s,z\right) \right) \right|_{s=0}  } \right).
\end{equation}
The above definition is further explained in \S~\ref{sec: main res}.   With this, we prove that
$$
\mathrm{det}^2(\Delta-z(1-z)I)= \exp\left(B z + C \right)  \phi(z)\cdot \left( \frac{Z(z)}{G_1(z)(\Gamma(z-1/2))^{\kv}} \right)^2,
$$
where $Z(z)$ is the Selberg zeta function associated to $M$ and $\chi$; $\phi(z)$ is the scattering determinant, $B$ and $C$ are certain explicitly computable constants, and  $G_1(z)$ is a function given in terms of the
Barnes double gamma function and the gamma functions. See \S\ref{sec: main res} and  Definition~\ref{defGamE}  for further details.

As a byproduct of our investigation we deduce the expression of the scattering determinant as a regularized determinant, see Corollary \ref{thm:cont} below.
Note also that the above expression yields the symmetric functional equation for the renormalized Selberg zeta function, see also \cite{Fisher87}.

One of our secondary goals is to allow for ramification points in our Riemann surfaces. Recently, Lee-Peng Teo applied the regularized determinant, in this case, to study the Ruelle zeta-function \cite{Teo20}.

The paper is organized as follows.  In Section 2 we define all notation and state necessary background material from the literature.
It was our aim to make the article self-contained yet not overly lengthy.  In Section 3 we defined and studied the completed
zeta functions which formed the basis of our study.  Since we chose to study finite volume hyperbolic Riemann surfaces with elliptic
points, it was necessary to undertake additional considerations which occur, and those computations constitute a considerable
portion of section 3.  In section 4 we obtain the meromorphic continuation of the super-zeta functions $\Z_+(s,z)$ and $\Z_-(s,z)$
from which we prove the main results of the article, which are stated in Section 5.

\noindent

\section{Background Information}

\subsection{Basic notation} \label{secNotation}
Let $\Gamma\subseteq\mathrm{PSL}(2, \mathbb{R})$ be a Fuchsian group of the first kind acting by fractional
linear transformations on the upper
half-plane $\mathbb{H}:=\{z\in\mathbb{C}\,|\,z=x+iy\,,\,y>0\}$. Let $M$ be the quotient
space $\Gamma\backslash\mathbb{H}$ and $g$ the genus of $M$. Denote by $\csp$ the number of inequivalent cusps
of $M$ and by $\{R\}_{\Gamma}$ the set of inequivalent elliptic classes of elements of $\Gamma$. For a
fixed elliptic representative $R$, we denote by $d_R$ the order of element $R$ and by $\elp$ the cardinality of
the finite set $\{R\}_{\Gamma}$  of inequivalent elliptic classes in $\Gamma$.

Recall that the hyperbolic volume $\mathrm{vol}(M)$ of $M$ is given by the Gauss-Bonnet formula
\begin{align}\label{GaussBon}
\mathrm{vol}(M)=2\pi\bigg(2g-2+\csp + \sum_{\{R\}_{\Gamma}} \left( 1- \frac{1}{d_R}\right)\bigg)=2\pi\bigg( (2g-2 + \csp) + \elp - \sum_{\{R\}_{\Gamma}} \left(\frac{1}{d_R}\right)\bigg).
\end{align}

Let $V$ be an $h-$dimensional complex inner-product space and let $\chi:\Gamma \mapsto \GL(V)$ be a finite-dimensional unitary representation of $\Gamma.$

 Let $\{S_1,S_2,\dots,S_\csp \}$ be parabolic representatives for the cusps of $\Gamma.$  For each $j = 1\dots \csp,$ set
 $$V_j = \{ v \in V ~|~ \chi(S_j) v = v \}.$$
 Let $k_j = \dim(V_j),$ and define the \emph{degree of singularity}, $\kv = \sum_j k_j$. If $\kv = 0,$ we say that $\chi$ is \emph{regular}, see \cite[Section~1.5 p.28]{Fisher87}.

For each  $S_j \in \{S_1,S_2,\dots,S_\csp \},$  let $\lambda_{j1},\ldots, \lambda_{jh}$ be eigenvalues of $\chi(S_j)$ counted with multiplicity.We can write
$$\lambda_{jp} = e^{2\pi i \beta_{jp}}, $$
where $\beta_{jp} = 0$ for $1 \leq p \leq k_j,$ and $\beta_{jp} \in (0,1) $ for $k_j < p \leq h,$ see \cite[Section~1.5 p.30]{Fisher87}.

For $j \in \{1,\dots,\csp \}$ let $$\beta_j = \sum_{p = k_j + 1}^h \beta_{jp}.  $$
Finally we define
\beq a(\chi) = \left( 2^{h\csp}\prod_{j = 1}^\csp \prod_{p=k_j+1}^h \left(\sin(\pi \beta_{jp} \right)\right)^{-1}  \label{defAchi}
\eeq
to be the expression associated to the character $\chi$ and which appears in the right hand side of the second equation from the top of page 71 \cite[Section~2.4 p.71]{Fisher87}. Note that in our paper, $k = 0$ since we do not consider higher weight forms.

Let $\hil(\Gamma,\chi)$ be the associated Hilbert space of square-integrable automorphic functions, and let $\Delta$ be the (non-negative) self-adjoint extension of the Laplacian (see \cite[p. 15-16]{Venkov83}).

\vskip .10in
Given a meromorphic function $f(s)$, we define the \emph{null set} $N(f)$ to be $N(f) = \{ s \in \mathbb{C}~|~f(s)=0\}$ counted with multiplicity. Similarly, $P(f)$
denotes the \emph{polar set,} the set of points where $f$ has a pole.

Our notation is from the well-known sources \cite{Hejhal83}, \cite{Iwa02} and \cite{Venkov83}.

\subsection{The Gamma function} \label{secGamma}
Let $\Gamma(s)$ denote the Gamma function. Its poles are all simple and located at each point of $-\NN,$ where $-\NN = \{ 0,-1,-2,\dots \}$.
For $|\arg{s}| \leq \pi-\delta$ and $\delta > 0$, the asymptotic expansion \cite[p. 20]{AAR99} of $\log{\Gamma(s)}$ is given by
\beq \label{gammaExpan}
\log{\Gamma(s)} = \frac{1}{2}\log{2\pi} + \left(s-\frac{1}{2}\right)\log{s} - s + \sum_{j=1}^{m-1} \frac{B_{2j}}{(2j-1)2j}\frac{1}{s^{2j-1}} + g_{m}(s).
\eeq
Here $B_i$ are the Bernoulli numbers.  Also, for each $m$, $g_{m}(s)$ is a holomorphic function in the right half plane $\Re(s)>0$ such that
$g_{m}^{(j)}(s) = O(s^{-2m+1-j})$ as $\Re(s)\to \infty$ for all integers $j\geq 0$, and where the implied constant depends on $j$ and $m$.

\subsection{The double Gamma function} \label{secBarnes}
The Barnes double Gamma function is an entire order two function defined by
\begin{equation} \label{BarnesDef}
G\left(s+1\right)=\left(2\pi\right)^{s/2}\exp\left[-\frac{1}{2}\left[\left(1+\gamma\right)s^{2}+s\right]\right]\prod_{n=1}^{\infty}\left(1+\dfrac{s}{n}\right)^{n}\exp\left[-s+\frac{s^{2}}{2n}\right],
\end{equation}
where $\gamma$ is the Euler constant. Therefore, $G(s+1)$ has a zero of multiplicity $n,$ at each point $-n \in \{-1,-2,\dots \}.$
For $\Re(s)>0$ and as $s \rightarrow \infty,$ the asymptotic expansion of $\log G(s+1)$ is given in \cite{FL01} or \cite[Lemma 5.1]{AD14} by
\begin{equation} \label{asmBarnes}
\log G(s+1) = \frac{s^2}{2}\left( \log{s} - \frac{3}{2}\right) - \frac{\log{s}}{12} + \frac{s}{2}\log(2\pi)  + \zeta^{\prime}(-1) \>- \\
\sum_{k=1}^{n} \frac{B_{2k+2}}{4\,k\,(k+1)\,s^{2k}} +  h_{n+1}(s).
\end{equation}
Here, $\zeta(s)$ is the Riemann zeta-function and
$$
h_{n+1}(s)= \frac{(-1)^{n+1}}{s^{2n+2}}\int_{0}^{\infty}\frac{t}{\exp(2\pi t) -1} \, \int_{0}^{t^2}\frac{y^{n+1}}{y+s^2} \,dy \,dt.
$$
By a close inspection of the proof of \cite[Lemma 5.1]{AD14}, it follows that $h_{n+1}(s)$ is a holomorphic function in the right half plane $\Re(s)>0$
which satisfies the asymptotic relation $h_{n+1}^{(j)}(s) = O(s^{-2n-2-j})$ as $\Re(s)\to \infty$ for all integers $j\geq 0$,
and where the implied constant depends upon $j$ and $n$.

\subsection{Automorphic scattering determinant} \label{ascatmatrix}
Let $\phi(s)$ denote the determinant of the automorphic scattering matrix $\Phi(s)$ \cite[\S~2.3 and  p. 59]{Venkov83}. Note that in \cite{Venkov83} they denote $\phi(s)$ by $\Delta(s).$

The function $\phi(s)$ is meromorphic of order at most two. Furthermore, $\phi(s)$ is holomorphic for $\Re(s) > \frac{1}{2}$, except for a finite number of poles, and it satisfies the functional equation
\begin{equation}\label{functeq phi}
\phi(s)\phi(1-s)=1.
\end{equation}


\begin{thm}  (\cite[Thm.~3.5 p. 59]{Venkov83}) For $\Re(s)> 1$ we have that
\begin{equation} \label{phiDirich}
\phi (s)=\left( \frac{ \sqrt{\pi} \,\Gamma \left( s-\frac{1}{2}
\right) }{\Gamma \left( s\right) }\right) ^{\kv}\overset{\infty }{\underset
{n=1}{\sum }}\frac{d(n) }{g_{n}^{2s}}
\end{equation}%
where $0< g_{1} < g_{2}< ...$ and $d(n) \in \mathbb{C}$ with $d(1)\neq 0$.\footnote{The scattering determinant $\phi(s)$ is actually real valued on $\mathbb{R}$ and it follows that $d(n) \in \mathbb{R}.$ }
\end{thm}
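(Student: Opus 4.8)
The plan is to recognize this as the classical Dirichlet-series expansion of the scattering determinant, and to carry it out by the constant-term computation for Eisenstein series, now with coefficients twisted by $\chi$. Recall that $\Phi(s)$ is assembled from the zeroth Fourier coefficients of the vector-valued Eisenstein series: if $E_{\mathfrak{a}}(z,s)$ denotes the Eisenstein series attached to a cusp $\mathfrak{a}$, then its constant term in the Fourier expansion at a cusp $\mathfrak{b}$ has the shape $\delta_{\mathfrak{a}\mathfrak{b}}\,y^{s}\,I + \varphi_{\mathfrak{a}\mathfrak{b}}(s)\,y^{1-s}$, with $\varphi_{\mathfrak{a}\mathfrak{b}}(s)\colon V_{\mathfrak{b}}\to V_{\mathfrak{a}}$. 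Stacking these blocks over all cusps produces a matrix $\Phi(s)$ of size exactly $\kv$, since $\kv=\sum_{j}k_{j}=\sum_{j}\dim V_{j}$, and $\phi(s)=\det\Phi(s)$. So it suffices to get good control on the blocks $\varphi_{\mathfrak{a}\mathfrak{b}}(s)$ for $\Re(s)>1$.

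First I would compute each block by the usual unfolding argument: choosing scaling matrices $\sigma_{\mathfrak{a}},\sigma_{\mathfrak{b}}$ for the two cusps and sorting $\sigma_{\mathfrak{a}}^{-1}\Gamma\sigma_{\mathfrak{b}}$ by the modulus $c>0$ of the lower-left entry, one isolates the $y^{1-s}$ part of the constant term. The $x$-integration produces the elementary beta integral
\begin{equation*}
\int_{-\infty}^{\infty}\frac{dx}{(x^{2}+1)^{s}}=\frac{\sqrt{\pi}\,\Gamma\!\left(s-\tfrac12\right)}{\Gamma(s)},
\end{equation*}
while the sum over residues modulo $c$ collapses to a finite matrix $S_{\mathfrak{a}\mathfrak{b}}(c)\in\mathrm{Hom}(V_{\mathfrak{b}},V_{\mathfrak{a}})$ built from values of $\chi$ (a twisted Ramanujan-type sum), bounded uniformly in $c$. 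This gives
\begin{equation*}
\varphi_{\mathfrak{a}\mathfrak{b}}(s)=\frac{\sqrt{\pi}\,\Gamma\!\left(s-\tfrac12\right)}{\Gamma(s)}\sum_{c\in\mathcal{C}_{\mathfrak{a}\mathfrak{b}}}\frac{S_{\mathfrak{a}\mathfrak{b}}(c)}{c^{2s}},
\end{equation*}
where $\mathcal{C}_{\mathfrak{a}\mathfrak{b}}$ is the discrete set of allowed moduli; since $\#\{c\in\mathcal{C}_{\mathfrak{a}\mathfrak{b}}:c\le X\}\ll X^{2}$ (a lattice-point count in $\Gamma$), the series converges absolutely for $\Re(s)>1$. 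Hence $\Phi(s)=\bigl(\tfrac{\sqrt{\pi}\,\Gamma(s-1/2)}{\Gamma(s)}\bigr)D(s)$ with $D(s)=\sum_{g}B(g)g^{-2s}$ a $\kv\times\kv$ matrix-valued Dirichlet series over a discrete set of positive reals, convergent for $\Re(s)>1$.

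Next I would take determinants. Pulling the scalar factor out of every row,
\begin{equation*}
\phi(s)=\det\Phi(s)=\left(\frac{\sqrt{\pi}\,\Gamma\!\left(s-\tfrac12\right)}{\Gamma(s)}\right)^{\kv}\det D(s),
\end{equation*}
and $\det D(s)=\sum_{\tau}\mathrm{sgn}(\tau)\prod_{i}D_{i\,\tau(i)}(s)$ is a finite sum of products of Dirichlet series in the variable $2s$; since sums and products of Dirichlet series supported on a discrete set of positive real frequencies are again of the same type, $\det D(s)=\sum_{n\ge1}d(n)g_{n}^{-2s}$ for some $0<g_{1}<g_{2}<\cdots$, convergent for $\Re(s)>1$. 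To see $d(1)\neq0$ I would use that $\Phi(s)$ is invertible: the functional equation $\Phi(s)\Phi(1-s)=I$, equivalently \eqref{functeq phi}, forces $\det\Phi(s)\not\equiv0$, so $\det D(s)$ is not the zero series, and relabeling so that $g_{1}$ is the smallest frequency carrying a nonzero coefficient gives $d(1)\neq0$. Reality of the $d(n)$ then follows from the fact that $\phi$ is real-valued on $\mathbb{R}$.

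I expect the main obstacle to be bookkeeping rather than hard analysis: one must track the representation $\chi$ so that each $\varphi_{\mathfrak{a}\mathfrak{b}}(s)$ is genuinely a map between the fixed spaces $V_{\mathfrak{b}},V_{\mathfrak{a}}$ and the assembled matrix has size exactly $\kv$ (this is where the projections onto the $V_{j}$ and the parameters $\beta_{jp}$ enter), establish the uniform lattice-point bound that pins the abscissa at $\Re(s)>1$, and verify that the various moduli sets $\mathcal{C}_{\mathfrak{a}\mathfrak{b}}$ and their multiplicative combinations merge into a single strictly increasing sequence $\{g_{n}\}$ — a point that uses $\csp>0$ and the discreteness of $\Gamma$. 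Since the statement is exactly \cite[Thm.~3.5, p.~59]{Venkov83}, in the paper itself one simply cites that reference.
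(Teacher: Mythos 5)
This statement is imported verbatim from Venkov (Thm.~3.5, p.~59), and the paper offers no proof beyond the citation; your sketch reproduces the standard argument from that source (and from Hejhal/Iwaniec): unfolding the constant term of the Eisenstein series to get the beta integral $\sqrt{\pi}\,\Gamma(s-\tfrac12)/\Gamma(s)$ times a generalized Dirichlet series in each of the $\kv^2$ entries, pulling the common scalar out of the $\kv\times\kv$ determinant, and using the functional equation $\phi(s)\phi(1-s)=1$ to see the resulting series is not identically zero so that $d(1)\neq 0$. The outline is correct and matches the approach the paper relies on, so nothing further is needed.
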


We will rewrite \eqref{phiDirich} in a slightly different form. Let $c_{1}=-2\log{g_{1}} \neq 0,$  $c_{2}=\log d(1),$  and let $u_{n}=g_{n}/g_{1}>1$.
Then for $\text{Re}(s) > 1$ we can write $\phi( s) =L(s)H(s)$ where
\begin{equation} \label{eqPhiA}
L(s) =\left( \frac{ \sqrt{\pi} \,\Gamma \left( s-\frac{1}{2}
\right) }{\Gamma \left( s\right) }\right) ^{\kv} e^{c_{1}s+c_{2}}
\end{equation}
and
\begin{equation} \label{eqPhiB}
H(s) =1+\overset{\infty }{\underset{n=2}{\sum }}\frac{
a\left( n\right) }{u_{n}^{2s}},
\end{equation}
where  $a(n) \in \mathbb{C}.$ The series \eqref{eqPhiB} converges absolutely for $\Re(s)>1$.
From the generalized Dirichlet series representation \eqref{eqPhiB} of $H(s)$, it follows that
\beq \label{asmPhi}
\frac{d^{k}}{ds^{k}}\log{H(s)} = O(\beta_k^{-\Re(s)}) \quad \textrm{\rm when} \quad \Re(s) \to +\infty,
\eeq
for some $\beta_k > 1$  where the implied constant depends on $k \in \NN.$

For $0 \leq \sigma_i \leq 1,$ define $q(\sigma_i)=\text{[The multiplicity of the pole of $\phi(s)$ at $s = \sigma_{i}$] }.$

The divisor of $\phi(s)$ consists of the following sets of points \cite[pp. 59--60]{Venkov83}:

\begin{enumerate}
\item Finitely many real zeros of the form $1-\sigma_i \in [0,1/2)$ for $i=1\dots T$, each with multiplicity $q(\sigma_i)$;
\item Finitely many real zeros of the form $\rho_i > 1/2$, $i=1\dots N,$ where $N$ is defined to be the sum of the multiplicities;
\item Finitely many real poles of the form $1-\rho_i < 1/2$, where  $i=1\dots N;$
\item Finitely many poles $\sigma_i \in (1/2,1].$ Each pole has multiplicity $q(\sigma_i).$
\item Poles of the form $1-\rho$ and $1-\overline{\rho}$ with $\Re(\rho) > 1/2$ and $\Im(\rho) > 0;$
\item Zeros of the form $\rho$ and $\overline{\rho}$ with $\Re(\rho) > 1/2$ and $\Im(\rho) > 0$.
\end{enumerate}


Let $\lambda_i$ be an eigenvalue for the positive, self-adjoint extension $\Delta$ of the hyperbolic Laplacian.
Denote by $A(\lambda_i)$ the $\Delta-$eigenspace corresponding to the eigenvalue $\lambda_i.$
Set $A_{1}(\lambda_{i})$ to be the subspace of $A(\lambda_i)$ that is spanned by the incomplete theta series.
For each pole $\sigma_{i} \in (1/2,1]$, $i=1,...,T$  the space $A_{1}(\sigma_{i}(1-\sigma_{i}))$   is non-trivial. In fact
from (\cite[Eq. 3.33 on p.299]{Hejhal83}) we have that
\begin{equation*}
q(\sigma_i)=\text{[The multiplicity of the pole of $\phi(s)$ at $s = \sigma_{i}$] } \leq \dim A_{1}(\sigma_{i}(1-\sigma_{i})) \leq \kv.\label{eqBndPole}
\end{equation*}

The eigenvalue $\lambda_i = \sigma_{i}(1-\sigma_{i})$ is called a \emph{residual eigenvalue.}

\subsection{Selberg zeta-function} \label{szeta}
The Selberg zeta function associated to the quotient space $M=\Gamma\backslash\mathbb{H},$ and unitary representation $\chi:\Gamma \mapsto \GL(V)$ is defined for $\Re(s)>1$ by
the absolutely convergent Euler product
\begin{equation*}
Z(s)=\prod\limits_{\left\{ P_0\right\} \in P(\Gamma
)}\prod_{n=0}^{\infty }\det\left( 1_V-\chi(P_0) N(P_0)^{-(s+n)}\right) \text{,}
\end{equation*}
where $P(\Gamma )$ denotes the set of all primitive hyperbolic conjugacy classes in $\Gamma,$ and $N(P_0)$ denotes the norm of $P_0 \in \Gamma.$
From the product representation given above, we have for $\Re(s)>1$ that

\begin{equation*} \label{log z(s)}
\log{Z(s)} = \sum_{\left\{ P_0\right\} \in P(\Gamma
)} \sum_{n=0}^\infty \tr \left(-\sum_{l=1}^\infty \frac{\chi(P_0)^l}{l} N(P_0)^{-(s+n)l}  \right) = -
\sum_{P\in H(\Gamma )}\tr(\chi(P))\frac{\Lambda (P)}{N(P)^{s}\log N(P)},
\end{equation*}
where $H(\Gamma )$ denotes the set of all hyperbolic conjugacy classes in $\Gamma,$ and $\Lambda (P)=\frac{\log N(P_{0})}{1-N(P)^{-1}}$, for the
 primitive element $P_{0}$ in the conjugacy class containing $P$ (see \cite[p. 83]{Venkov83}).

Let $P_{00}$ be the primitive hyperbolic conjugacy class in all of $P(\Gamma )$ with the smallest norm. Setting $\alpha = N(P_{00})^{\tfrac{1}{2}}$, for $\Re(s) > 2$ and $k \in \NN$ we have the following asymptotic formula
\beq \label{eqSelZetaBound}
\frac{d^{k}}{ds^{k}}\log{Z(s)} = O(\alpha^{-\Re(s)}) \quad \textrm{\rm when} \quad \Re(s) \to +\infty,
\eeq
with an implied constant which depends on $k \in \NN.$

If $\lambda_j$ is an eigenvalue in the discrete spectrum of $\Delta$, let  $m(\lambda_j)$ denote its multiplicty.
We now state the divisor of the $Z(s)$  (see \cite[p. 49]{Venkov90}  \cite[p. 499]{Hejhal83}):

\begin{enumerate}
\item Zeros at the points  $s_j$ on the line $\Re(s)=\tfrac{1}{2}$ symmetric relative to the real axis and in  $(1/2,1]$, where each zero $s_j$ has multiplicity $m(s_j) = m(\lambda_j)$ where $s_j(1-s_j) = \lambda_j$ is an eigenvalue in the discrete spectrum of $\Delta$  \label{szeta1};

\item Zeros at the points $s_{j} \in [0,1/2)$ where $s_j(1-s_j) = \lambda_j \in[0,1/4)$ is an eigenvalue in the discrete spectrum of $\Delta$ and
the multiplicity $\widetilde{m}(s_{j})$ is given by $\widetilde{m}(s_{j}) =  m(\lambda_j) - q(1-s_j) \geq 0$; we denote by $K$ the number of eigenvalues $\lambda_j \in[0,1/4)$ and put $m_j=m(\lambda_j)$, $j=1,...K $ ($q(\cdot)$ was defined in \S\ref{ascatmatrix}).

Note that, in the case when $\lambda_j$ is not the residual eigenvalue, we take $q(1-s_j)= 0$, i.e. $\widetilde{m}(s_{j}) =  m(\lambda_j)$.

\item The point $s=\tfrac{1}{2}$ can be a zero or a pole, and the order of the point as a divisor is
$$\mm = 2d_{1/4}- \tfrac{1}{2}\left( \kv- \tr \Phi (\tfrac{1}{2})\right)$$
where $d_{1/4}$ denotes the multiplicity  of the possible eigenvalue $\lambda = \tfrac{1}{4}$
of $\Delta$;
\item Poles at $s=-n-\tfrac{1}{2}, $ where $n=1,2,\dots,$  each with
multiplicity $\kv$;
\item Finitely many real zeros $1-\rho_i < 1/2$, where  $i=1\dots N;$
\item Zeros at each $s = 1-\rho, 1-\overline{\rho}$ where $\rho$ is a zero of $\phi(s)$ with $\Re(\rho) > \tfrac{1}{2}$ and $\Im(\rho)>0;$ \
 \item Zeros at points $s=-n \in -\NN =  \{0,-1,-2,\dots\}$, with multiplicities
    \begin{equation} \label{EllipZero}
   m_n= h\frac{\vol(M)}{2\pi }(2n+1)- \sum_{\{R\}_{\Gamma}}\sum_{k=1}^{d_R-1} \frac{\tr(\chi^{k}(R))}{d_R} \frac{\sin\left(\frac{k\pi (2n+1)}{d_R}\right)}{\sin\left(\frac{k\pi}{d_R}\right)}.
   \end{equation}
\end{enumerate}

The last set of zeros are called \emph{trivial} zeros.  For the purposes of our paper it will be  crucial that we give another representation of $m_n$ in such a way that it is clear that they are non-negative integers. Ultimately, we will construct a double gamma based function whose divisor is exactly $m_n.$ Finally, note that we will see that, actually $m_0=0$.

\section{Construction of the complete zeta functions}

\subsection{The trivial zeros stemming from the identity motion and elliptic elements}
Recall \eqref{EllipZero}. We construct an entire function on $\CC$ with zeros at the points $-n \in -\NN$ with multiplicities $m_n,$ following ideas of Fisher \cite{Fisher87}.

\begin{lem}, \label{lemFinEllSum}
 Suppose that $\omega$ is a $d\mathrm{th}-$root of unity, and define the integer $q$ by
$\omega =  \exp\left(\frac{2 \pi i q}{d}\right),$ with $0 \leq q \leq d-1.$ For $n \in \NN,$ let $q(n) \in\{0,\ldots,d-1\}$ be the residue of $n+q$ modulo $d$ and let $\tilde{q}(n) \in \{0,\ldots,d-1 \}$ be the residue of $n-q$ modulo $d.$ Then we have
$$\sum_{k=1}^{d-1} \omega^k \left( \frac{\sin\left(\frac{k \pi (2n+1)}{d}\right)}{\sin\left(\frac{k\pi}{d}\right)}\right)  =  d-1 - \left(q(n) + \tilde{q}(n)\right)$$
\end{lem}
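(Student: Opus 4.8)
The plan is to evaluate the trigonometric sum by expanding $\sin\!\left(\frac{k\pi(2n+1)}{d}\right)/\sin\!\left(\frac{k\pi}{d}\right)$ as a geometric-type sum and then summing over $k$ using roots-of-unity orthogonality. First I would recall the elementary identity
$$
\frac{\sin\!\left((2n+1)\theta\right)}{\sin\theta} = \sum_{j=-n}^{n} e^{2ij\theta},
$$
valid whenever $\sin\theta\neq 0$, which one proves by summing the finite geometric series $\sum_{j=-n}^{n}(e^{2i\theta})^{j}$. Applying this with $\theta = \frac{k\pi}{d}$ gives
$$
\sum_{k=1}^{d-1}\omega^{k}\,\frac{\sin\!\left(\frac{k\pi(2n+1)}{d}\right)}{\sin\!\left(\frac{k\pi}{d}\right)}
= \sum_{k=1}^{d-1}\omega^{k}\sum_{j=-n}^{n} e^{2\pi i jk/d}
= \sum_{j=-n}^{n}\sum_{k=1}^{d-1} e^{2\pi i k(q+j)/d},
$$
where I used $\omega^{k}=e^{2\pi i kq/d}$ and then interchanged the two finite sums.

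Next I would handle the inner sum over $k$. For a fixed integer $m$, $\sum_{k=0}^{d-1}e^{2\pi i km/d}$ equals $d$ if $d\mid m$ and $0$ otherwise; hence $\sum_{k=1}^{d-1}e^{2\pi i km/d}$ equals $d-1$ if $d\mid m$ and $-1$ otherwise. So the whole expression becomes $\sum_{j=-n}^{n}\left(d\cdot \mathbf{1}[d\mid (q+j)] - 1\right) = -(2n+1) + d\cdot\#\{\,j\in[-n,n]: q+j\equiv 0 \pmod d\,\}$. The remaining task is to count, among the $2n+1$ consecutive integers $q+j$ with $j$ ranging over $[-n,n]$, how many are divisible by $d$. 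Equivalently, counting integers in the interval $[q-n,\,q+n]$ that are $\equiv 0\pmod d$.

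The slightly delicate bookkeeping — and the one place to be careful — is converting this count into the stated closed form $d-1-(q(n)+\tilde q(n))$. I would argue as follows: an interval of $2n+1$ consecutive integers contains either $\lfloor (2n+1)/d\rfloor$ or $\lceil (2n+1)/d\rceil$ multiples of $d$, and the exact number is governed by the residues of the endpoints. Writing $n+q = a d + q(n)$ with $q(n)$ the residue of $n+q$ mod $d$, the number of multiples of $d$ in $[q-n, q+n] = [q+n - 2n,\, q+n]$ is $\big\lfloor \tfrac{q+n}{d}\big\rfloor - \big\lfloor \tfrac{q+n-2n-1}{d}\big\rfloor$; expressing $q+n$ and $q-n-1$ via their residues $q(n)$ and (after a shift by $1$) $\tilde q(n)$ modulo $d$ and simplifying the two floor functions, the count equals $\tfrac{(2n+1) - q(n) - \tilde q(n) + (d-1)}{d}$, using that $q(n)+\tilde q(n) \equiv 2n \equiv (2n+1)-1 \pmod d$ so the numerator is indeed divisible by $d$. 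Substituting this count $C$ into $-(2n+1) + dC$ yields $-(2n+1) + (2n+1) - q(n) - \tilde q(n) + (d-1) = d-1 - (q(n)+\tilde q(n))$, which is the claim. I expect the main obstacle to be precisely this last step: pinning down the floor-function identity for the count of multiples of $d$ in the shifted interval and verifying the congruence $q(n)+\tilde q(n)\equiv 2n\pmod d$ so that everything is an integer; once the residue arithmetic is set up cleanly the rest is formal.
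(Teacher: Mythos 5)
Your proof is correct, but it takes a genuinely different route from the one the paper uses for this lemma. The paper follows Fisher directly: it writes $\sin\bigl(\tfrac{k\pi(2n+1)}{d}\bigr)$ as a difference of two exponentials, evaluates each half separately by inserting a convergence factor $t^{r+md}$ and taking an Abel-type limit $t\to 1^-$, obtaining $\tfrac12(d-1)-q(n)$ and $-\tfrac12(d-1)+\tilde q(n)$ for the two pieces, and subtracts. You instead use the Dirichlet-kernel identity $\sin((2n+1)\theta)/\sin\theta=\sum_{j=-n}^{n}e^{2ij\theta}$, interchange the finite sums, and apply roots-of-unity orthogonality to reduce everything to counting multiples of $d$ in the interval $[q-n,\,q+n]$; your floor-function bookkeeping is sound (with $n+q=ad+q(n)$ and $n-q=bd+\tilde q(n)$ the count is $a+b+1=\tfrac{2n+d-q(n)-\tilde q(n)}{d}$, and the congruence $q(n)+\tilde q(n)\equiv 2n \pmod d$ guarantees integrality), and substituting back gives exactly $d-1-(q(n)+\tilde q(n))$. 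It is worth noting that your argument is essentially the one the authors themselves use later, in the Appendix (Lemmas A.2 and A.3), to derive the alternate expression for $m_n$ in terms of floor functions; so your route is not foreign to the paper, it just appears in a different place. Your approach has the advantage of being entirely finite and elementary (no limit or regularizing factor is needed, and positivity/integrality of the resulting multiplicities is transparent), while the paper's main-text computation yields the two one-sided identities \eqref{eqTrigSum1} and \eqref{eqTrigSum2} separately, which is the form in which Fisher's treatment is stated.
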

\begin{proof}
We follow \cite[pages 66--67]{Fisher87}.\footnote{Note that our notation is very different from his. Note that his $k$ represents the weight of his forms, which we take as zero in our paper.}

\begin{multline}
\sum_{k=1}^{d-1} \omega^k \frac{i e^{-i\frac{\pi k}{d} (2n+1) } }{2\sin\left(\frac{k\pi}{d}\right)}= \sum_{k=1}^{d-1} \frac{ e^{-i\frac{2 \pi k}{d} (q + n) } }{1 - e^{\frac{i k 2 \pi}{d} }} = \lim_{t \rightarrow 1^-} \sum_{m=0}^\infty \sum_{r=0}^{d-1} t^{r+md}\sum_{k=1}^{d-1} e^{-i\frac{2 \pi k}{d} (q + n - r - md) } \\ =  \lim_{t \rightarrow 1^-} \frac{1}{1-t^d}\left(t^{q(n)}d - \sum_{r=0}^{d-1}t^r  \right) = \frac{1}{2}(d-1) - q(n), \label{eqTrigSum1}
\end{multline}
where the last equality follows from
$$\sum_{k=1}^{d-1} e^{-i\frac{2 \pi k}{d} (q + n - r - md) } =  \begin{cases}
      d-1, & \text{if} \quad r = n + q \mod d \\
      -1 & \text{else.}
   \end{cases}$$
A similar computation yields
\beq \sum_{k=1}^{d-1} \omega^k \frac{i e^{i\frac{\pi k}{d} (2n+1) } }{2\sin\left(\frac{k\pi}{d}\right)} = -\frac{1}{2}(d-1) + \tilde{q}(n).  \label{eqTrigSum2} \eeq
The lemma now follows by subtracting \eqref{eqTrigSum2} from   \eqref{eqTrigSum1}.
\end{proof}
Assume $R$ is an order $d_R$ elliptic element of $\Gamma$. Recall that $\chi(R)$ is unitary, acting on the $h-$dimensional space $V.$ We will need to simplify expressions of the form
$\tr(\chi^{k}(R)).$ Since $\chi(R)$ is can be diagonalized, it follows that
\beq \label{eqDiagEig}
\tr(\chi^{k}(R)) = \sum_{j=1}^h \omega(R)^k_j,
\eeq
where $\omega(R)_j,$ for $j=1\dots h$ are the eigenvalues (and $d_R\mathrm{th}-$roots of unity) of $\chi(R).$

For each class $\{R\}$ and $j=1\dots h,$ define integer $q(R)_j$, $0 \leq q(R)_j \leq d_R-1$ by
\begin{equation}\label{def:q(R)j}
\omega(R)_j =  \exp\left(\frac{2 \pi i q(R)_j}{d_R}\right).
\end{equation}

\begin{defn}
For an elliptic representative $R$ in ${\{R\}_\Gamma},$ and $m \in \NN,$ define $q_j(R,m), \tilde{q}_j(R,m),$ and $k_j(R,m), \tilde{k}_j(R,m) \in \ZZ$ by
$$q_j(R,m) := m+q(R)_j + d_R k_j(R,m) \in \{0,\ldots,d_R-1\},$$
$$\tilde{q}_j(R,m) := m- q(R)_j + d_R \tilde{k}_j(R,m) \in \{0,\ldots,d_R-1\},$$
define
\beq
\alpha(R,m) := \sum_{j=1}^h (\tilde{q}_j(R,m) +q_j(R,m)), \label{defAlpha}
\eeq
  $$k(R,m,j) := k_j(R,m) + \tilde{k}_j(R,m), $$
  $$\beta(R,m) := \sum_{j=1}^h k(R,m,j). $$
\bl With the notation above,
\begin{equation}\label{alpha of R,m}
\alpha(R,m)=2mh+d_R\sum_{j=1}^{h}k(R,m,j) =2mh +\beta(R,m)d_R,
\end{equation}

\beq \label{defKrmj}
 k(R,m,j)=\left\{
               \begin{array}{ll}
         1, & \mathrm{\, if\,  } m<q(R)_j\, \mathrm{and }\, m+q(R)_j<d_R \\
          -1, & \mathrm{\, if\,  } m\geq q(R)_j\, \mathrm{and }\, m+q(R)_j \geq d_R\\
        0, & \mathrm{\, otherwise  }.
        \end{array}. \right.
\eeq

\el
\pf
Equation~\ref{alpha of R,m} follows immediately. For Equation~\ref{defKrmj}, note that $k_j(R,m)d_R, \tilde{k}_j(R,m)d_R$ are the multiples of $d_R$ that translate $m+q(R)_j, m- q(R)_j$ back to the range $\{0,\ldots,d_R-1\}.$ Noting that $0 \leq q(R)_j \leq d_R-1,$ the derivation of Equation~\ref{defKrmj} is straightforward.
\epf

\begin{rem}
In case when $\chi$ is trivial, $\beta(R,m)=0$ for all $m$ and all expressions above are significantly simplified.
\end{rem}

\end{defn}
We give an equivalent expression for $m_n.$ Applying Lemma~\ref{lemFinEllSum}
$$
\sum_{k=1}^{d_R-1} \tr(\chi^{k}(R))\frac{\sin\left(\frac{k\pi (2n+1)}{d_R}\right)}{\sin\left(\frac{k\pi}{d_R}\right)} = h(d_R-1)-\sum_{j=1}^h (\tilde{q}_j(R,n) +q_j(R,n)),
$$

and also using \eqref{GaussBon}, we rewrite \eqref{EllipZero} as

\begin{multline} \label{eqB392}
m_n = h \bigg( 2g-2 + \csp + \elp - \sum_{\{R\}_{\Gamma}} \left(\frac{1}{d_R}\right)\bigg)(2n+1) - \sum_{\{R\}_\Gamma}\frac{1}{d_R}\left(h(d_R-1)-\sum_{j=1}^h (\tilde{q}_j(R,n) +q_j(R,n)) \right) \\ =  h \bigg( 2g-2 + \csp + \elp \bigg)(2n+1) - \sum_{\{R\}_\Gamma}\frac{1}{d_R}\left(h(2n +d_R)-\sum_{j=1}^h (\tilde{q}_j(R,n) +q_j(R,n)) \right) \\ =
 h \bigg( 2g-2 + \csp + \elp \bigg)(2n+1) - \sum_{j=1}^h\sum_{\{R\}_\Gamma}\frac{1}{d_R}\left(2n +d_R-(\tilde{q}_j(R,n) +q_j(R,n)) \right)
\end{multline}

Consider the function
$$
 f_R(s) := \frac{(\Gamma(s))^{d_R}}{(G(s+1))^2} \prod_{m=0}^{d_R - 1} \Gamma\left(\frac{s+m}{d_R}\right)^{-(\tilde{q}_j(R,m) +q_j(R,m))}
$$

\bl The principal branch of $(f_R(s))^{1/d_R}$ is an meromorphic function on all of $\CC$  with zeros (or poles) of order
$$-\frac{1}{d_R}\left(2n +d_R-(\tilde{q}_j(R,n) +q_j(R,n)) \right)$$ at $s=-n,$ for $n \in \NN.$
\el
\begin{proof}
If follows from the definitions that the order of $f_R(s)$ at $s=-n$ is
$$-\left(2n +d_R-(\tilde{q}_j(R,n) +q_j(R,n)) \right).$$
It follows from the definitions of $\tilde{q}_j(R,n), q_j(R,n)$ that
$$d_R~|~\left(2n - (\tilde{q}_j(R,n) +q_j(R,n))\right).$$
Since $f_R(s)$ is meromorphic on $\CC$ it can be written as a quotient of two entire functions $g_R(s)/h_R(s)$ whose zero sets are disjoint. Hence, the order of each zero of both $g_R,h_R$ must be divisible by $d_R.$ Finally, using the Weierstrass factorization theorem one could construct an entire $d_R$-th root of  $g_R,h_R,$ and $f_R$.
\end{proof}

From the right hand side of \eqref{eqB392} we immediately obtain
\bl The following meromorphic function on $\CC$
$$G_0(s) := \left(\frac{(G(s+1))^2}{(\Gamma(s))}\right)^{h(2g-2 + \csp + \elp )}  \prod_{j=1}^h \prod_{\{R\}_\Gamma}  \left( \frac{(\Gamma(s))^{d_R}}{(G(s+1))^2} \prod_{m=0}^{d_R - 1} \Gamma\left(\frac{s+m}{d_R}\right)^{-(\tilde{q}_j(R,m) +q_j(R,m))}\right)^{\tfrac{1}{d_R}}$$
has zeros or order $m_n$ at $s=-n,$ for $n \in \NN.$
\el

For  representative $R$ in ${\{R\}_\Gamma}$ and $m \in \NN,$ Using the Gauss-Bonnet formula \eqref{GaussBon}, we can rewrite $G_0(s)$ as
$$
G_0(s) =  \left(\frac{ (G(s+1))^2}{\Gamma(s)}\right)^{h \frac{\vol(M)}{2\pi }}\prod_{\{R\}_\Gamma}  \left(\Gamma(s)\right)^{h(1-1/d_R)}\prod_{m=0}^{d_R - 1} \Gamma\left(\frac{s+m}{d_R}\right)^{-\alpha(R,m)/d_R}.
$$
Note that the fractional powers of $G(s+1)$ and $\Gamma(s)$ are defined via the principal branch of $\log(z).$

Finally, set
\begin{equation}\label{defn:g1}
G_1(s)= \left(\frac{(2\pi)^{-s} (G(s+1))^2}{\Gamma(s)}\right)^{h \frac{\vol(M)}{2\pi }}\prod_{\{R\}_\Gamma}  d_R^{-h(1-1/d_R)s}\left(\Gamma(s)\right)^{h(1-1/d_R)}\prod_{m=0}^{d_R - 1} \Gamma\left(\frac{s+m}{d_R}\right)^{-\alpha(R,m)/d_R}.
\end{equation}

Note that $G_1$ is an meromorphic function on $\CC$ of order two with zeros (or poles) at points $-n \in -\NN$ and corresponding multiplicities  $m_n$. Also note that we added an exponential normalization factor that will later simplify some computations.

\subsection{Asymptotic expansion of $G_1$}

In order to derive our main results, we need the asymptotic expansion of function $\log G_1(s)$, as $s\to\infty$ which is given in the following lemma.

\begin{lem}
As $s\to\infty$ we have the following asymptotic expansion of $\log G_1(s)$:
\begin{multline} \label{eqExpG1}
\log{G_1(s)}  =   h\frac{\vol(M)}{2\pi}s^{2}(\log(s)-\frac{3}{2})+\widetilde{a}_1 s\left( \log(s)-1\right) +b_1s +\widetilde{a}_0\log(s)+b_{0} + O(s^{-1}),
\end{multline}
where
\begin{equation}\label{a1tilde def}
\widetilde{a}_1=- h\frac{\vol(M)}{2\pi}-\sum_{\{R\}_\Gamma}\sum_{m=0}^{d_R-1}\frac{\beta(R,m)}{d_R},
\end{equation}
\begin{equation}\label{b1 def}
b_1=\sum_{\{R\}_\Gamma}\sum_{m=0}^{d_R-1}\frac{\beta(R,m)}{d_R}\log d_R
\end{equation}
and contants $\widetilde{a}_0$ and $b_0$ will be explicitly given in the proof, equations \eqref{widetilde a0} and \eqref{b0}, respectively.
\end{lem}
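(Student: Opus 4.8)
The plan is to compute the asymptotic expansion of $\log G_1(s)$ termwise by inserting the known asymptotic expansions of $\log\Gamma$ and $\log G(s+1)$ from \eqref{gammaExpan} and \eqref{asmBarnes} into the logarithm of the defining product \eqref{defn:g1}. Taking logarithms, we have
$$
\log G_1(s) = h\frac{\vol(M)}{2\pi}\Big(-s\log(2\pi) + 2\log G(s+1) - \log\Gamma(s)\Big) + \sum_{\{R\}_\Gamma}\Big(-h(1-1/d_R)s\log d_R + h(1-1/d_R)\log\Gamma(s) - \frac{1}{d_R}\sum_{m=0}^{d_R-1}\alpha(R,m)\log\Gamma\big(\tfrac{s+m}{d_R}\big)\Big).
$$
So the computation splits into two parts: the ``volume term'' involving $G(s+1)$ and $\Gamma(s)$, and the elliptic sum involving $\Gamma(s)$ and the scaled gamma factors $\Gamma((s+m)/d_R)$.

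For the volume term, I would substitute \eqref{asmBarnes} for $2\log G(s+1)$ and \eqref{gammaExpan} for $\log\Gamma(s)$ (both with, say, $m=1$ or $n=1$ so the error is $O(s^{-1})$), and collect terms by order: the $s^2\log s$ and $s^2$ contributions come entirely from $2\log G(s+1)$, giving $h\frac{\vol(M)}{2\pi}s^2(\log s - \tfrac32)$ exactly as stated; the $s\log s$ terms from $2\log G(s+1)$ and $-\log\Gamma(s)$ cancel, which is why the volume does not appear in the $s\log s$ coefficient; the surviving $s$-term picks up $-s\log(2\pi)$ together with the $\frac{s}{2}\log(2\pi)\cdot 2 = s\log 2\pi$ from Barnes and the $+s$ from $-\log\Gamma$, and the $\log s$ term picks up $-\tfrac{1}{12}$ from Barnes and $\tfrac12$ from $-\log\Gamma$; the constant collects $2\zeta'(-1)$ and $-\tfrac12\log 2\pi$. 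For the elliptic sum, the key technical device is that $\log\Gamma((s+m)/d_R) = (\tfrac{s+m}{d_R} - \tfrac12)\log\big(\tfrac{s+m}{d_R}\big) - \tfrac{s+m}{d_R} + \tfrac12\log 2\pi + O(s^{-1})$, and then one expands $\log\big(\tfrac{s+m}{d_R}\big) = \log s - \log d_R + \tfrac{m}{s} + O(s^{-2})$ and $\big(\tfrac{s+m}{d_R}\big)\log\big(\tfrac{s+m}{d_R}\big)$ accordingly. Summing over $m$ from $0$ to $d_R-1$ against the weights $\alpha(R,m)/d_R$ and using the identity \eqref{alpha of R,m}, namely $\alpha(R,m) = 2mh + \beta(R,m)d_R$, I would separate the ``$2mh$'' part, which produces a clean arithmetic contribution (here $\sum_{m=0}^{d_R-1} m = \binom{d_R}{2}$ is useful, as is $\sum_m m^2$), from the ``$\beta(R,m)d_R$'' part, which is precisely what feeds the $\beta(R,m)/d_R$ sums appearing in $\widetilde a_1$ and $b_1$.

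Assembling the pieces: the $s\log s$ coefficient receives $-h\frac{\vol(M)}{2\pi}$ from the telescoping of the volume term against $\sum h(1-1/d_R)\log\Gamma(s)$ (using Gauss--Bonnet \eqref{GaussBon} to recombine $h\frac{\vol(M)}{2\pi} = h(2g-2+\csp+\elp) - \sum h/d_R$), plus $-\sum_{\{R\}_\Gamma}\sum_{m}\frac{\beta(R,m)}{d_R}$ from the $\beta$-part of the scaled gammas, yielding $\widetilde a_1$ as in \eqref{a1tilde def}; the plain $s$-term collects $\sum_{\{R\}_\Gamma}\sum_m \frac{\beta(R,m)}{d_R}\log d_R$ from the $-\log d_R$ shift inside the scaled-gamma logarithm together with the $-h(1-1/d_R)s\log d_R$ prefactors and the linear Stirling terms, producing $b_1$ as in \eqref{b1 def} (the apparent $\log 2\pi$ and constant-coefficient-of-$s$ contributions must be tracked and one verifies what survives into $b_1$); the $\log s$ coefficient and the constant are then read off and \emph{defined} to be $\widetilde a_0$ and $b_0$ via \eqref{widetilde a0} and \eqref{b0}. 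The error term is $O(s^{-1})$ since every expansion was truncated at that order and the $\frac{m}{s}$ corrections inside the logarithms contribute at order $O(1)$ times $s^{-1}\cdot s = O(1)$ — one must be slightly careful here that the $(\tfrac{s+m}{d_R})\cdot\tfrac{m}{s\,d_R}$ cross term is genuinely $O(1)$ and not larger, which it is.

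The main obstacle is purely bookkeeping: correctly tracking the $s$-linear and $\log s$ coefficients through the scaled gamma factors $\Gamma((s+m)/d_R)$, because the argument rescaling by $1/d_R$ and shift by $m$ both contribute at these two orders, and one must simultaneously invoke Gauss--Bonnet to see the volume cancellation in the $s\log s$ coefficient and invoke \eqref{alpha of R,m} to split $\alpha(R,m)$ into the part that vanishes on average (the $2mh$ piece, which contributes to $\widetilde a_0, b_0$ but not to $\widetilde a_1, b_1$) and the part carrying $\beta(R,m)$. There is no conceptual difficulty — the expansions of $\log\Gamma$ and $\log G$ are quoted with explicit uniform error bounds in the right half-plane, so the only risk is an arithmetic slip in combining roughly a dozen terms of three different orders.
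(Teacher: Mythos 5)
Your proposal follows essentially the same route as the paper: take logarithms of \eqref{defn:g1}, insert the Stirling expansions \eqref{gammaExpan} and \eqref{asmBarnes}, expand $\log\Gamma\bigl(\tfrac{s+m}{d_R}\bigr)$, split $\alpha(R,m)=2mh+\beta(R,m)d_R$ via \eqref{alpha of R,m} to separate the part feeding $\widetilde a_1,b_1$ from the part absorbed into $\widetilde a_0,b_0$, and collect coefficients, with $\widetilde a_0$ and $b_0$ simply defined as the resulting $\log s$ and constant coefficients. One small correction to your narrative: $2\log G(s+1)$ contains no $s\log s$ term, so the $-h\frac{\vol(M)}{2\pi}$ in $\widetilde a_1$ comes directly from the $-\log\Gamma(s)$ inside the volume factor (the volume \emph{does} appear in the $s\log s$ coefficient), and the cancellation at that order is instead between the $\Gamma(s)^{h(1-1/d_R)}$ factors and the $2mh$-part of the scaled gamma factors.
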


\begin{proof}
Using \eqref{alpha of R,m} we obtain

\begin{multline} \label{starting eq}
\log \left( \prod_{\{R\}_\Gamma} \prod_{m=0}^{d_R - 1} \Gamma\left(\frac{s+m}{d_R}\right)^{-\alpha(R,m)/d_R} \right) = -\sum_{\{R\}_\Gamma}\sum_{m=0}^{d_R-1}\frac{2mh}{d_R}\log\Gamma\left(\frac{s}{d_R}+\frac{m}{d_R}\right)  \\ - \sum_{\{R\}_\Gamma}\sum_{m=0}^{d_R-1}\beta(R,m)\log\Gamma\left(\frac{s}{d_R}+\frac{m}{d_R}\right)
\end{multline}

For any real numbers $d>1$ and $a\geq0$, equation \eqref{gammaExpan} yields the following expansion as $s\to\infty$
\begin{equation*}\label{gammaFractExp}
\log \Gamma\left(\frac{s}{d}+\frac{a}{d}\right) = \frac{1}{d}s(\log s -1)-\frac{\log d}{d}s+\left( \frac{a}{d}-\frac{1}{2}\right) \log s +\frac{1}{2}\log(2\pi d)-\frac{a}{d}\log d + O(s^{-1}).
\end{equation*}

Therefore
\begin{eqnarray}\label{gammaFractExp2}
\sum_{\{R\}_\Gamma}\sum_{m=0}^{d_R-1}\frac{2mh}{d_R}\log\Gamma\left(\frac{s}{d_R}+\frac{m}{d_R}\right)=
\sum_{\{R\}_\Gamma}h\left(1-\frac{1}{d_R}\right)\left[s(\log s-1) - s \log d_R \right] + \\
+\sum_{\{R\}_\Gamma}h\frac{(d_R-1)(d_R-2)}{6d_R}\log s + \sum_{\{R\}_\Gamma}h(d_R-1)\left( \frac{1}{2}\log(2\pi d_R)-\frac{2d_R-1}{3d_R}\log d_R\right) + O(s^{-1})\notag
\end{eqnarray}
and
\begin{multline}\label{gammaFractExp3}
\sum_{\{R\}_\Gamma}\sum_{m=0}^{d_R-1}\beta(R,m)\log\Gamma\left(\frac{s}{d_R}+\frac{m}{d_R}\right)=
\sum_{\{R\}_\Gamma}\sum_{m=0}^{d_R-1}\frac{\beta(R,m)}{d_R}s(\log s-1) - s \sum_{\{R\}_\Gamma}\sum_{m=0}^{d_R-1}\frac{\beta(R,m)}{d_R}\log d_R \\
+\sum_{\{R\}_\Gamma}\sum_{m=0}^{d_R-1}\frac{\beta(R,m)}{d_R}\left(\frac{m}{d_R}-\frac{1}{2}\right)\log s + \sum_{\{R\}_\Gamma}\sum_{m=0}^{d_R-1}\beta(R,m)\left( \frac{1}{2}\log(2\pi d_R)-\frac{m}{d_R}\log d_R\right) + O(s^{-1})
\end{multline}

Next, using \eqref{asmBarnes} and \eqref{gammaExpan} we obtain
\begin{multline}
\log \left(  \left(\frac{(2\pi)^{-s} (G(s+1))^2}{\Gamma(s)}\right)^{h \frac{\vol(M)}{2\pi }} \right) =  h\frac{\vol(M)}{2\pi} \cdot \\ \cdot \left( s^2 \left(\log(s)-\frac{3}{2} \right) - s(\log(s) -1) + \frac{1}{3}\log(s) + 2\zeta'(-1)-\frac{1}{2}\log(2\pi) \right) + O(s^{-1})
\end{multline}
and
\begin{multline} \label{lasteq}
\log \left( \prod_{\{R\}_\Gamma}  d_R^{-h(1-1/d_R)s}\left(\Gamma(s)\right)^{h(1-1/d_R)} \right) \\ = \sum_{\{R\}_\Gamma} \left[  h(1-1/d_R) \left( \frac{1}{2}\log{2\pi} + (s-\frac{1}{2})\log{s} - s \right) -h(1-1/d_R)s\log(d_R) \right] + O(s^{-1}).
\end{multline}

Finally, combining \eqref{starting eq}--\eqref{lasteq} with \eqref{defn:g1} we arrive at the expansion
\begin{multline*}
\log{G_1(s)}  = h\frac{\vol(M)}{2\pi} \left( s^2 \left(\log(s)-\frac{3}{2} \right) - s(\log(s) -1) + \frac{1}{3}\log(s) + 2\zeta'(-1)-\frac{1}{2}\log(2\pi)     \right)  \\
- \sum_{\{R\}_\Gamma}\sum_{m=0}^{d_R-1}\frac{\beta(R,m)}{d_R}s(\log s-1) +s \sum_{\{R\}_\Gamma}\sum_{m=0}^{d_R-1}\frac{\beta(R,m)}{d_R}\log d_R
\end{multline*}
\begin{multline*}+\left( h \sum_{\{R\}_\Gamma}\frac{d_R-1}{d_R}\left(\frac{1}{2}-\frac{d_R-2}{6}\right) - \sum_{\{R\}_\Gamma}\sum_{m=0}^{d_R-1}\frac{\beta(R,m)}{d_R}\left(\frac{m}{d_R}-\frac{1}{2}\right)\right)\log s \\ + h \sum_{\{R\}_\Gamma}(d_R-1)\left(\frac{1}{2d_R}\log(2\pi)-\frac{1}{2}\log(2\pi d_R) + \frac{2d_R-1}{3d_R}\log d_R\right)\\-\sum_{\{R\}_\Gamma}\sum_{m=0}^{d_R-1}\beta(R,m)\left( \frac{1}{2}\log(2\pi d_R)-\frac{m}{d_R}\log d_R\right) + O(s^{-1}),
\end{multline*}
hence
\begin{multline*}
\log{G_1(s)}  =   h\frac{\vol(M)}{2\pi}s^{2}(\log(s)-\frac{3}{2})+\widetilde{a}_1 s\left( \log(s)-1\right)  +b_1 s + \widetilde{a}_0\log(s)+b_{0} + O(s^{-1}),
\end{multline*}
and it is left to give expressions for $\widetilde{a}_0$ and $b_0$:
\begin{equation}\label{widetilde a0}
\widetilde{a}_0= h \sum_{\{R\}_\Gamma}\frac{d_R-1}{d_R}\left(\frac{1}{2}-\frac{d_R-2}{6}\right) - \sum_{\{R\}_\Gamma}\sum_{m=0}^{d_R-1}\frac{\beta(R,m)}{d_R}\left(\frac{m}{d_R}-\frac{1}{2}\right) + h\frac{\vol(M)}{6\pi},
\end{equation}
\begin{multline}\label{b0}
b_0=h \sum_{\{R\}_\Gamma}(d_R-1)\left(-\frac{1}{2d_R}\log(2\pi)-\frac{1}{2}\log(2\pi d_R) + \frac{2d_R-1}{3d_R}\log d_R\right)\\ + h\frac{\vol(M)}{2\pi}\left(2\zeta'(-1)-\frac{1}{2}\log(2\pi) \right)-\sum_{\{R\}_\Gamma}\sum_{m=0}^{d_R-1}\beta(R,m)\left( \frac{1}{2}\log(2\pi d_R)-\frac{m}{d_R}\log d_R\right).
\end{multline}

\end{proof}

\subsection{Complete zeta functions}  \label{complete zetas}

\begin{defn} \label{defCompleteZeta} We define completed zeta functions $Z_+$ and $Z_-$ as
$$
Z_+(s)=\frac{Z(s)}{G_1(s)(\Gamma(s-1/2))^{\kv}}, \quad Z_-(s)= Z_+(s)\phi(s),
$$
where $G_1(s)$ is defined in \eqref{defn:g1} and $\phi(s)$ is the scattering determinant.
\end{defn}
Note that we have canceled out the trivial zeros and poles of the Selberg zeta-function $Z(s)$.
Hence the zero set $N(Z_+)$ of $Z_{+}$consists of the following points:
\begin{enumerate}
\item At $s = \tfrac{1}{2}$ with multiplicity $\mm$ where
$$
\mm = 2d_{1/4}+\kv-\tfrac{1}{2}\left( \kv-\tr \Phi (\tfrac{1}{2})\right) = 2d_{1/4}+\tfrac{1}{2} \left( \kv +\tr \Phi (\tfrac{1}{2})\right) \geq 0;
$$
\item At the points $s_{j} \in [0,1/2)$ where $s_j(1-s_j) = \lambda_j$ is an eigenvalue in the discrete spectrum of $\Delta$
each with multiplicity $m(\lambda_j) - q(1-s_j) \geq 0;$
\item At the points $s_j$ on the line $\Re(s)=\tfrac{1}{2}$ symmetric relative to the real axis and in  $(1/2,1]$  where
each zero $s_j$ has multiplicity $m(s_j) = m(\lambda_j)$ where $s_j(1-s_j) = \lambda_j$ is an eigenvalue in the discrete spectrum of $\Delta$;
\item At each point $s = 1-\rho,1-\overline{\rho}$ where $\rho$ is a zero of $\phi(s)$ with $\Re(\rho) > \tfrac{1}{2},$ and $\Im(\rho) > 0.$
\end{enumerate}


From the definition of $Z_-$, it immediately follows that $N(Z_-) = 1-N(Z_+)$. In other words,
$s$ is a zero of $Z_+$ if and only if $1-s$ is a zero, necessarily with the same multiplicity, of $Z_{-}$.

\section{Superzeta functions associated to complete zeta functions $Z_+$ and $Z_-$}

In this section we define superzeta functions associated to completed zeta functions $Z_+$ and $Z_-$ and show that they possess a meromorphic continuation to the whole complex plane, regular at zero.

\subsection{Regularized products using superzeta functions}
Let $\RR^{-} = (-\infty,0]$ be the non-positive real numbers. Let $\{y_{k}\}_{k\in \mathbb{N}}$ be the sequence of zeros
of an entire function $f$ of order at most two, repeated with their multiplicities. Let
$$X_f = \{z \in \CC~|~ (z-y_{k}) \notin \RR^{-}~\text{for all} ~ y_{k} \}. $$
For $z \in X_f,$ and $s \in \CC$ consider the series
\begin{equation}
\Z_{f}(s,z)=\sum_{k=1}^{\infty }(z-y_{k})^{-s},  \label{Zeta1}
\end{equation}
where the complex exponent is defined using the principal branch of the logarithm with $\arg z\in
\left( -\pi ,\pi \right) $ in the cut plane $\CC \setminus \RR^{-}$.
Since $f$ is of order at most two, the series $\Z_{f}(s,z)$ converges absolutely for $\Re(s) > 2.$
Following \cite{Voros1}, the series $\Z_{f}(s,z)$ is called the \it superzeta function \rm
associated to the zeros of $f $, or the simply the \emph{superzeta} function of $f.$

If $\Z_{f}(s,z)$ has a meromorphic continuation which is regular at $s=0,$ we define the \emph{superzeta regularized product} associated to $f$ as
$$
D_{f}\left( z \right) = \exp\left( {-\frac{d}{ds}\left. \Z_{f}\left( s,z\right) \right|_{s=0}  } \right).
$$

Hadamard's product formula allows us to write
\beq
f(z) = \Delta_{f}(z) = e^{g(z)} z^r \prod_{k=1}^\infty \left( \left(1-\frac{z}{y_k}
\right)\exp\left[ \frac{z}{y_k} + \frac{z^2}{2{y_k}^2}   \right]    \right),
\eeq
where $g(z)$ is a polynomial of degree 2 or less, $r\geq 0$ is the order of eventual zero of $f$ at $z=0,$ and the other zeros $y_k$ are listed with multiplicity.

The following proposition, originally due to Voros ( \cite{Voros1}, \cite{Voros3}, \cite{VorosKnjiga}) is proven in \cite[Prop. 4.1]{FJS16} (see also \cite{FJC19} for a related, more general result).

\begin{prop} \label{prop: Voros cont.}
Let $f$ be an entire function of order two, and for $k\in\NN,$ let $y_k$ be the sequence of zeros of $f.$ Let $\Delta_{f}(z)$
denote the Hadamard product representation of $f.$ Assume that for $n>2$ we have the following asymptotic expansion:
\begin{equation} \label{defAE}
\log \Delta_{f}(z)= \widetilde{a}_{2}z^{2}(\log z-\frac{3}{2}%
)+b_{2}z^{2}+\widetilde{a}_{1}z\left( \log z-1\right) +b_{1}z+\widetilde{a}%
_{0}\log z+b_{0}+\sum_{k=1}^{n-1}a_{k}z^{\mu _{k}} + h_n(z),
\end{equation}
where $1>\mu _{1}>...>\mu _{n} \rightarrow -\infty $, $\mu_k\neq0,$ and $h_n(z)$ is a sequence of holomorphic functions in the sector $\left\vert \arg z\right\vert <\theta <\pi, \quad (\theta >0)$ such that $h_n^{(j)}(z)=O(|z|^{\mu_n-j})$, as $\left\vert z\right\vert \rightarrow \infty $ in the above sector, for all integers $j \geq 0.$

Then, for all $z\in X_f,$ the superzeta function $\Z_{f}(s,z)$  has a meromorphic continuation to the half-plane $\Re(s)<2$ which is regular at $s=0.$

Furthermore, the superzeta regularized product $D_{f}\left( z\right) $ associated to $f(s)$ is related to $\Delta_{f}(z)$ through the formula
\begin{equation}
\exp\left( {-\frac{d}{ds}\left. \Z_{f}\left( s,z\right) \right|_{s=0}  } \right) = D_{f}(z)=e^{-(b_{2}z^{2}+b_{1}z+b_{0})}\Delta_{f}(z).  \label{D(z)}
\end{equation}
\end{prop}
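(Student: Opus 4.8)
The plan is to realize $\Z_{f}(s,z)$, initially defined by the convergent series for $\Re(s)>2$, as a Mellin-type integral transform of the asymptotic data \eqref{defAE}, and then to read off both the meromorphic continuation and the value of the derivative at $s=0$ from that transform. The starting observation is that differentiating the Hadamard product three times annihilates the degree-$\le 2$ exponential factor $e^{g(z)}$: one finds
$$\frac{d^{3}}{dw^{3}}\log\Delta_{f}(w)=\frac{2r}{w^{3}}+2\sum_{k=1}^{\infty}\frac{1}{(w-y_{k})^{3}},\qquad\text{hence}\qquad\sum_{k=1}^{\infty}\frac{1}{(w-y_{k})^{3}}=\frac{1}{2}\frac{d^{3}}{dw^{3}}\log\Delta_{f}(w)-\frac{r}{w^{3}}.$$
Combined with the Eulerian identity $\int_{0}^{\infty}u^{2-s}(u+c)^{-3}\,du=\tfrac12\Gamma(s)\Gamma(3-s)\,c^{-s}$, valid for $0<\Re(s)<3$ and every $c\in\CC\setminus\RR^{-}$ (by analytic continuation in $c$ from $c>0$), applied with $c=z-y_{k}$, this gives, for $2<\Re(s)<3$,
$$\Z_{f}(s,z)=\frac{2}{\Gamma(s)\Gamma(3-s)}\int_{0}^{\infty}u^{2-s}\left(\frac{1}{2}\frac{d^{3}}{dw^{3}}\log\Delta_{f}(w)\Big|_{w=z+u}-\frac{r}{(z+u)^{3}}\right)du.$$

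Next I would split the $u$-integral at some $U>0$ and substitute into the tail the expansion obtained by differentiating \eqref{defAE} three times. Term by term this kills $b_{2}z^{2}$, $b_{1}z$, $b_{0}$ and the non-logarithmic parts $-\tfrac32\widetilde a_{2}z^{2}$, $-\widetilde a_{1}z$, and produces
$$\frac{1}{2}\frac{d^{3}}{dw^{3}}\log\Delta_{f}(w)-\frac{r}{w^{3}}=\frac{\widetilde a_{2}}{w}-\frac{\widetilde a_{1}}{2w^{2}}+\frac{\widetilde a_{0}-r}{w^{3}}+\sum_{k=1}^{n-1}\frac{a_{k}\mu_{k}(\mu_{k}-1)(\mu_{k}-2)}{2}\,w^{\mu_{k}-3}+\frac{1}{2}h_{n}^{(3)}(w),$$
with $h_{n}^{(3)}(w)=O(w^{\mu_{n}-3})$ in the sector. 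Re-expanding the powers of $z+u$ in powers of $u$, each resulting monomial integrates against $u^{2-s}$ to an elementary meromorphic function of $s$ (a power of $U$, or of $z$ via the Beta integral, times a ratio of Gamma functions), while the remainder integral $\int_{U}^{\infty}u^{2-s}\,O(u^{\mu_{n}-3})\,du$ is holomorphic for $\Re(s)>\mu_{n}-1$; since $\mu_{n}\to-\infty$, letting $n\to\infty$ furnishes the meromorphic continuation of $\Z_{f}(s,z)$ to all of $\CC$, in particular to $\Re(s)<2$. A glance at the poles shows they can occur only at $s=1,2$ and at the points $s=\mu_{k}+(\text{non-negative integer})$; the would-be poles at $0,-1,-2,\dots$ coming from the monomial integrals are cancelled by the zeros of $1/\Gamma(s)$. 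Hence $\Z_{f}(s,z)$ is regular at $s=0$.

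For the product formula I would compute $-\frac{d}{ds}\big|_{s=0}$ of the continued expression term by term. Because $1/\Gamma(s)$ vanishes to first order at $s=0$, only those monomials whose $s$-integral carries a simple pole at $s=0$ survive; for instance the $\widetilde a_{2}/w$ piece contributes $2\widetilde a_{2}z^{2-s}/\big((s-1)(s-2)\big)$, whose $-\frac{d}{ds}\big|_{s=0}$ equals $\widetilde a_{2}z^{2}(\log z-\tfrac32)$, and in the same way the $\widetilde a_{1}$-, $\widetilde a_{0}$-, $a_{k}$- and remainder-pieces reproduce $\widetilde a_{1}z(\log z-1)$, $\widetilde a_{0}\log z$, $a_{k}z^{\mu_{k}}$ and $h_{n}(z)$. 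Summing, $-\frac{d}{ds}\big|_{s=0}\Z_{f}(s,z)=\log\Delta_{f}(z)-(b_{2}z^{2}+b_{1}z+b_{0})$, which is precisely \eqref{D(z)} after exponentiating. Finally, since both sides are holomorphic in $z$, this identity, first established on the sub-domain of $X_{f}$ where the integral representation above is legitimate, propagates to all of $X_{f}$ by analytic continuation.

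The main obstacle is making the integral representation genuinely rigorous for as large a set of $z$ as possible: one must choose a ray (or a suitably deformed contour) from $z$ to infinity that avoids every $y_{k}$, justify the interchange of $\sum_{k}$ with $\int_{0}^{\infty}$ using bounds uniform in $k$ — this is exactly where the hypothesis $h_{n}^{(j)}(z)=O(|z|^{\mu_{n}-j})$ for $j$ up to $3$ enters, together with the convergence of $\sum_{k}|y_{k}|^{-2-\varepsilon}$ — and then reduce the case of general $z\in X_{f}$ to that situation by analytic continuation. The remaining work, namely the Gamma-function algebra at $s=0$ that makes $b_{2},b_{1},b_{0}$ drop out while the logarithmic terms (with their $-\tfrac32$ and $-1$ shifts) are correctly recovered, is a finite but somewhat lengthy computation.
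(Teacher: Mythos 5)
Your proposal is correct and follows essentially the same route as the paper's source for this proposition (\cite{FJS16}, Proposition 4.1, itself following Voros): the Mellin--Beta representation $\Z_{f}(s,z)=\tfrac{2}{\Gamma(s)\Gamma(3-s)}\int_{0}^{\infty}u^{2-s}\,\tfrac12(\log\Delta_{f})'''(z+u)\,du$ valid for $2<\Re(s)<3$, followed by insertion of the thrice-differentiated asymptotic expansion, and indeed the intermediate formula you obtain is exactly the one the paper later quotes from \cite{FJS16} (their Equation 4.9). Two minor points to tidy up: if $f(0)=0$ you must either count the $r$ origin zeros in $\Z_{f}$ (and then not subtract $r/(z+u)^{3}$) or accept an extra factor $z^{-r}$ in \eqref{D(z)}; and your remark that only monomials with a simple pole at $s=0$ survive is imprecise --- the pieces regular at $s=0$ (the $a_{k}z^{\mu_{k}}$ and $h_{n}$ terms) also contribute to $-\Z_{f}'(0,z)$ through the simple zero of $1/\Gamma(s)$, exactly as your subsequent term-by-term computation in fact uses.
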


\subsection{Meromorphic continuation of $Z_+$ and $Z_-$}

Let $X_{\pm} = X_{Z_{\pm}},$ and for $z \in X_{\pm},$ denote by $\mathcal{Z}_{\pm}(s,z) :=\mathcal{Z}_{Z_{\pm}}(s,z)$ the superzeta functions of $Z_{\pm}.$

\begin{prop}\label{prop:superzetas}
For $z \in X_{\pm},$ the superzeta functions  $\mathcal{Z}_{\pm}(s,z)$ have meromorphic continuation to all of $s\in\CC$, regular at $s=0.$ Moreover,
\begin{equation}\label{zeta values at s=0}
\Z_{+}\left( 0,z\right)=-h\frac{\vol(M)}{2\pi}z^2 -\left(\widetilde{a}_1 +\kv\right)z+\kv-\widetilde{a}
_{0} \quad \mathrm{and} \quad \Z_{-}\left( 0,z\right)=- h\frac{\vol(M)}{2\pi}z^2 -\left(\widetilde{a}_1 +\kv\right)z -\widetilde{a}_0+\frac{\kv}{2}
\end{equation}
where $\widetilde{a}_1$ and $\widetilde{a}_0$ are defined by equations \eqref{a1tilde def} and \eqref{widetilde a0}.
\end{prop}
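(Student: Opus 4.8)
The plan is to apply Proposition~\ref{prop: Voros cont.} separately to each of the entire functions $Z_+$ and $Z_-$, after checking that each has order two and admits an asymptotic expansion of the form \eqref{defAE}. For $Z_+$, I would start from the factorization $Z_+(s) = Z(s)\big/\big(G_1(s)(\Gamma(s-1/2))^{\kv}\big)$ and compute $\log Z_+(s)$ as $s \to \infty$ (in a sector $|\arg s| < \theta < \pi$) by combining three ingredients already available in the excerpt: the bound \eqref{eqSelZetaBound}, which shows $\log Z(s) = O(\alpha^{-\Re(s)})$ decays faster than any power and so contributes only to the error term $h_n$; the asymptotic expansion \eqref{eqExpG1} of $\log G_1(s)$; and the Stirling expansion \eqref{gammaExpan} applied to $\log\Gamma(s-1/2)$, which gives $(s-1)\log s - s + \tfrac12\log 2\pi + O(s^{-1})$ after expanding $\log(s-1/2) = \log s + O(s^{-1})$. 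Subtracting, I read off that $\log\Delta_{Z_+}(z)$ has the shape \eqref{defAE} with $\widetilde a_2 = -h\,\vol(M)/2\pi$, with $\widetilde a_1$ equal to $-(\text{the }\widetilde a_1\text{ of }G_1) - \kv$, i.e. the quantity $\widetilde a_1 + \kv$ written additively, and with $\widetilde a_0$ equal to $\kv - (\text{the }\widetilde a_0\text{ of }G_1)$ (the $\kv$ coming from the $(s-1)\log s$ term, whose coefficient of $\log s$ is $s - 1$, contributing $-\kv$ to the $\log s$ coefficient after the overall sign flip — I would track this sign carefully). There are no non-integer powers $z^{\mu_k}$ present since both $G_1$ and the $\Gamma$-factor have only integer-power corrections beyond the listed terms, so the hypotheses of Proposition~\ref{prop: Voros cont.} are met; this yields the meromorphic continuation of $\Z_+(s,z)$ to all of $\CC$, regular at $s=0$.

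Next I would evaluate $\Z_+(0,z)$. Proposition~\ref{prop: Voros cont.} gives $D_{Z_+}(z) = e^{-(b_2 z^2 + b_1 z + b_0)}\Delta_{Z_+}(z)$, but the cleaner route to the \emph{value} $\Z_f(0,z)$ is the standard fact that, under the expansion \eqref{defAE}, one has $\Z_f(0,z) = -\widetilde a_2 z^2 - \widetilde a_1 z - \widetilde a_0$ — precisely the negatives of the coefficients of the "transcendental" terms $z^2\log z$, $z\log z$, $\log z$ in $\log\Delta_f(z)$ (this is how the $s=0$ value of a superzeta is extracted from the large-$z$ asymptotics; it is implicit in the proof of Proposition~\ref{prop: Voros cont.} and can be quoted from \cite{FJS16}). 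Plugging in the coefficients identified above gives
\[
\Z_+(0,z) = -h\frac{\vol(M)}{2\pi}z^2 - (\widetilde a_1 + \kv)z + \kv - \widetilde a_0,
\]
which is the first claimed formula.

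For $Z_-$, I would use $Z_-(s) = Z_+(s)\phi(s)$ together with the factorization $\phi(s) = L(s)H(s)$ from \eqref{eqPhiA}–\eqref{eqPhiB}. By \eqref{asmPhi}, $\log H(s) = O(\beta_0^{-\Re(s)})$ is negligible, so only $\log L(s)$ modifies the expansion: $\log L(s) = \kv\big(\log\Gamma(s-\tfrac12) - \log\Gamma(s)\big) + \tfrac{\kv}{2}\log\pi + c_1 s + c_2$. Expanding both $\Gamma$'s by \eqref{gammaExpan}, the $s\log s$ terms cancel and one is left with $\log L(s) = -\tfrac{\kv}{2}\log s + (\text{linear in }s) + \tfrac{\kv}{2}\log(2\pi) + \tfrac{\kv}{2}\log\pi - \tfrac{\kv}{2}\log 2\pi + \cdots$; in particular the coefficient of $\log z$ in $\log\Delta_{Z_-}(z)$ is that of $\log\Delta_{Z_+}(z)$ shifted by $-\tfrac{\kv}{2}$, while the $z^2\log z$ and $z\log z$ coefficients are unchanged (the linear term of $\log L$ only affects $b_1$, which does not enter $\Z_f(0,z)$). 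Hence $\widetilde a_0$ for $Z_-$ equals $(\widetilde a_0$ for $Z_+) + \tfrac{\kv}{2} = \kv - \widetilde a_0^{G_1} + \tfrac{\kv}{2}$, wait — more carefully, the $\log s$ coefficient of $\log\Delta_{Z_-}$ is $(\kv - \widetilde a_0^{G_1}) - \tfrac{\kv}{2}$, so $\Z_-(0,z) = -\widetilde a_2 z^2 - \widetilde a_1 z - \big((\kv - \widetilde a_0^{G_1}) - \tfrac{\kv}{2}\big) = -h\frac{\vol(M)}{2\pi}z^2 - (\widetilde a_1 + \kv)z - \widetilde a_0 + \tfrac{\kv}{2}$, matching the second claimed formula (here $\widetilde a_0$ in the statement denotes $\widetilde a_0^{G_1}$). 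The main obstacle throughout is bookkeeping of signs and of which constant absorbs which: one must consistently distinguish the $\widetilde a_i, b_i$ of $G_1$ from those of $\Delta_{Z_\pm}$, and correctly carry the $-\tfrac12$ shifts coming from $\log\Gamma(s - \tfrac12)$ versus $\log\Gamma(s)$. I would also verify at the end that $N(Z_\pm) \subseteq X_{\pm}$ is non-empty and that $Z_\pm$ are genuinely entire of order exactly two (not lower), so that Hadamard's formula and Proposition~\ref{prop: Voros cont.} apply without degeneracy — this follows since $Z_+$ inherits order two from $G_1$ while $Z(s)$ and $(\Gamma(s-1/2))^{\kv}$ have order one.
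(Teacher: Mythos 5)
Your strategy coincides with the paper's: show $Z_\pm$ are entire of order two with expansions of the form \eqref{defAE} by combining \eqref{eqSelZetaBound}, \eqref{eqExpG1}, Stirling for $\log\Gamma(z-1/2)$, and $\phi=LH$ with \eqref{asmPhi}; invoke Proposition~\ref{prop: Voros cont.} for the continuation and regularity at $s=0$; then read off $\Z_\pm(0,z)$ from the coefficients of $z^2\log z$, $z\log z$ and $\log z$. The coefficients you identify (including the extra $-\tfrac{\kv}{2}\log z$ contributed by $L$ in the passage from $Z_+$ to $Z_-$) are the same ones the paper computes, and your final formulas agree with \eqref{zeta values at s=0}.

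There is, however, a sign error in the ``standard fact'' you quote to evaluate $\Z_f(0,z)$, and it makes your displayed chain of equalities internally inconsistent. The correct statement is that $\Z_f(0,z)$ equals the coefficients themselves, not their negatives: if $\log\Delta_f(z)=A_2z^2(\log z-\tfrac32)+A_1z(\log z-1)+A_0\log z+(\text{polynomial})+o(1)$, then $\Z_f(0,z)=A_2z^2+A_1z+A_0$. A quick sanity check: for $f=1/\Gamma$ the superzeta is the Hurwitz zeta $\sum_{k\ge0}(z+k)^{-s}$, with $A_1=-1$, $A_0=\tfrac12$, and its value at $s=0$ is $\tfrac12-z=A_1z+A_0$, not $z-\tfrac12$. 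With your rule $\Z_f(0,z)=-A_2z^2-A_1z-A_0$ and your (correct) coefficients $A_2=-h\tfrac{\vol(M)}{2\pi}$, $A_0=\kv-\widetilde a_0$, you would obtain $+h\tfrac{\vol(M)}{2\pi}z^2+\cdots$, i.e.\ the negative of the claimed result; concretely, in your $Z_-$ computation $-\bigl((\kv-\widetilde a_0)-\tfrac{\kv}{2}\bigr)=\widetilde a_0-\tfrac{\kv}{2}$, which is not the $-\widetilde a_0+\tfrac{\kv}{2}$ you equate it to. The paper sidesteps this by quoting the explicit strip representation \cite[Eq.~4.9]{FJS16}, valid for $-1<\Re(s)<3$, namely $\Z_+(s,z)=\tfrac{2A_2}{(s-1)(s-2)}z^{2-s}-\tfrac{A_1}{s-1}z^{1-s}+A_0z^{-s}+\tfrac{1}{\Gamma(s)}f(s,z)$ with $f$ holomorphic near $s=0$, and setting $s=0$, where the factor $1/\Gamma(s)$ kills the remainder and the signs come out automatically. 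Correct the sign in your quoted fact (or derive the value from that integral representation) and the argument is complete.
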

\begin{proof}
We claim that $Z_{+}(z)$ and $Z_{-}(z)$ both are entire, order two functions which satisfy the hypothesis of Proposition~\ref{prop: Voros cont.}.
Indeed, the function $G_1(z)$ is a product of rescaled Barnes double gamma functions, so by using  the asymptotic expansion \eqref{gammaExpan} of the
gamma function, the expansion \eqref{asmBarnes} of the Barnes double gamma function, the bound \eqref{eqSelZetaBound} for logarithm of the Selberg zeta function,
and the asymptotic expansion of the logarithm of the automorphic scattering matrix $\phi(z)=L(z)H(z)$, deduced from \eqref{eqPhiA} and \eqref{asmPhi},  we can obtain an asymptotic expansion of the form
\eqref{defAE} for both $Z_+$ and $Z_-$. We refer to the proof of \cite[Thm. 6.2]{FJS16} where similar computations are worked out in complete detail.
Thus, by Proposition~~\ref{prop: Voros cont.}
both $\mathcal{Z}_{\pm}(s,z)$ have meromorphic continuation to all of $s\in\CC$ which are regular at $s=0.$

To prove the equation \eqref{zeta values at s=0} we need to deduce asymptotic expansion of $\log Z_+(z)$. From the definition~\ref{defCompleteZeta} we have
$$ \log{Z_+(z)} = \log{Z(z)} -\log{G_1(z)} - \kv \log{\Gamma(z-1/2)}$$
hence, combining \eqref{eqExpG1} with
\beq \label{eqGammShift}
\log{\Gamma(z-\tfrac{1}{2})} = z(\log(z)-1)-\log(z) + \frac{1}{2}\log(2\pi) + O(z^{-1})
\eeq
and with \eqref{eqSelZetaBound}, which yields $\log{Z(z)} = O(z^{-1})$ we get, as $\Re(z) \rightarrow \infty$:
\begin{multline}\label{log z+}
\log{Z_+(z)} = -h\frac{\vol(M)}{2\pi}z^{2}(\log z-\frac{3}{2}%
)-\left(\widetilde{a}_1 +\kv\right)z\left( \log z-1\right)\\ -b_{1}z+(\kv-\widetilde{a}
_{0})\log z-b_{0}-\frac{\kv}{2}\log(2\pi)+\sum_{k=1}^{n-1}a_{k}z^{\mu _{k}} + h_n(z),
\end{multline}
where $\widetilde{a}_1$, $b_1$, $\widetilde{a}_0$ and $b_0$ are given by formulas \eqref{a1tilde def}, \eqref{b1 def}, \eqref{widetilde a0} and \eqref{b0} respectively.

From the proof of \cite[Proposition 4.1]{FJS16} (see specifically \cite[Equation 4.9]{FJS16}),  and the asymptotic expansion \eqref{log z+} we see that for $-1<\Re(s)<3$ and $z\in X_+$ we have
$$
\Z_+(s,z)=\frac{-2h\frac{\vol(M)}{2\pi}}{(s-1)(s-2)}z^{2-s}+\frac{\widetilde{a}_1 +\kv}{(s-1)}z^{1-s}+(\kv-\widetilde{a}
_{0})z^{-s}+\frac{1}{\Gamma(s)}f(s,z),
$$
where $f(s,z)$ is a holomorphic function for $\Re(s)>-1/2$. Therefore
$$
\Z_+(0,z)=-h\frac{\vol(M)}{2\pi}z^2 -\left(\widetilde{a}_1 +\kv\right)z+\kv-\widetilde{a}
_{0}
$$
which proves the first formula in \eqref{zeta values at s=0}.

Recall that $Z_{-}(z) = \phi(z) Z_{+}(z).$ Hence from the asymptotic properties of $\log{\phi(z)}$, given in \eqref{eqPhiA}, it follows that
\begin{equation}\label{log z-}
\log{Z_{-}(z)} = \log{Z_{+}(z)} + \frac{\kv}{2}\log{\pi} -\frac{\kv}{2}\log{z} + c_1z + c_2 +o(1)
\,\,\,\,\,\text{\rm as $z \rightarrow \infty$}.
\end{equation}
Here $c_1$ and $c_2$ are from \eqref{eqPhiA}, and we used the asymptotic expansions \eqref{log gamma of z} of $\log\Gamma(z-\tfrac{1}{2})$ and asymptotic
\beq \label{log gamma of z}
\log{\Gamma(z)} = z(\log(z)-1)-\frac{1}{2}\log(z) + \frac{1}{2}\log(2\pi) + O(z^{-1}),
\eeq
of $\log{\Gamma(z)}$. This, together with \eqref{log z+} gives

\begin{multline}\label{asympt logz-}
\log{Z_{-}(z)} = -h\frac{\vol(M)}{2\pi}z^{2}(\log z-\frac{3}{2}
)-\left(\widetilde{a}_1 +\kv\right)z\left( \log z-1\right)\\+(c_1-b_1)z+ \left(\frac{\kv}{2}-\widetilde{a}_{0}\right)\log z-b_{0}-\frac{\kv}{2}\log2+c_2+ \log z\sum_{k=1}^{n-1}a'_{k}z^{\mu _{k}} + g_n(z).
\end{multline}
Reasoning analogously as above, using the asymptotic expansion \eqref{asympt logz-}, we deduce the second formula in \eqref{zeta values at s=0}.

\end{proof}

\section{Main results}\label{sec: main res}
Recall the definition of  $N(Z_+),$ the null set of $Z_+(s),$ which is specified in \S\ref{complete zetas}, and the set  $N(Z_-),$  which is symmetric to $N(Z_+)$ with respect to the mapping $s\mapsto1-s$. One observes that it contains essentially the non-trivial spectral information of the Laplacian:  the eigenvalues and the resonances.

To motivate our key definition we perform a \emph{purely formal computation.} First recall \eqref{Zeta1} the definition of the superzeta functions $\Z_{\pm}\left( s,z\right):=\mathcal{Z}_{Z_{\pm}}(s,z),$
Next, recalling that $s_j(1-s_j) = \lambda_j,$ we have formally
\begin{multline}
 \exp\left( {-\frac{d}{ds}\left.\left( \Z_{+}\left( s,z\right)+\Z_{-}\left( s,z\right) \right) \right|_{s=0}  } \right)= \prod_j \left[ (z - s_j)(z - (1-s_j)) \right]^2(z-\rho_j)(z-(1-\rho_j)) \\
= \prod_j(\lambda_j-z(1-z))^2 \prod_j (\rho_j(1-\rho_j) - z(1-z)),
\end{multline}
where the first product is taken over all discrete eigenvalues of the Laplacian and the second product is taken over all resonances. Therefore, the sum $\Z_{+}\left( s,z\right)+\Z_{-}\left( s,z\right) $ satisfies the same formal identity as the zeta function given by formula (1.3) in \cite{Ef88-91}. This motivates our definition of the regularized determinant of the Laplacian.

\begin{defn} \label{def: square of Lapl} Let $\mathcal{Z}_{\pm}(s,z) :=\mathcal{Z}_{Z_{\pm}}(s,z)$ be the superzeta functions of $Z_{\pm}.$ The square of the regularized determinant of $\lp - z(1-z)I$ is defined to be
\begin{equation} \label{reg det sq defin}
\mathrm{det}^2(\Delta-z(1-z)I)= \exp\left( {-\frac{d}{ds}\left.\left( \Z_{+}\left( s,z\right)+\Z_{-}\left( s,z\right) \right) \right|_{s=0}  } \right)=D_{Z_+}(z)D_{Z_-}(z).
\end{equation}
\end{defn}

Our main result is the following explicit evaluation of $\mathrm{det}^2(\Delta-z(1-z)I)$.

\begin{thm} \label{thm:main} For $z\in  X_{+} \cap X_{-}$, the regularized determinant of the square of $\Delta-z(1-z)I$ is given by the formula
\begin{multline} \label{main result}
\mathrm{det}^2(\Delta-z(1-z)I)= \exp\left((2b_1-c_1) z + 2b_0+\tfrac{\kv}{2}\log(4\pi)-c_2 \right)  \phi(z)\cdot \left( \frac{Z(z)}{G_1(z)(\Gamma(z-1/2))^{\kv}} \right)^2.
\end{multline}
Here $b_1$ is given by \eqref{b1 def}, $b_0$ is defined in \eqref{b0} and $c_1$, $c_2$ are from  \eqref{eqPhiA}.
\end{thm}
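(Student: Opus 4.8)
The plan is to reduce the theorem to Proposition~\ref{prop: Voros cont.} applied twice, once to $Z_+$ and once to $Z_-$, and then combine the two resulting formulas. By Definition~\ref{def: square of Lapl} we have $\mathrm{det}^2(\Delta-z(1-z)I)=D_{Z_+}(z)D_{Z_-}(z)$, so the entire task is to evaluate each superzeta regularized product via the identity \eqref{D(z)}, namely $D_{Z_\pm}(z)=e^{-(b_2^{\pm}z^2+b_1^{\pm}z+b_0^{\pm})}\Delta_{Z_\pm}(z)$, where $b_2^{\pm},b_1^{\pm},b_0^{\pm}$ are the constants appearing in the asymptotic expansion \eqref{defAE} of $\log\Delta_{Z_\pm}(z)$. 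The first step, therefore, is to read off these constants: from \eqref{log z+} we get for $Z_+$ that $b_2^{+}=0$, $b_1^{+}=-b_1$, and $b_0^{+}=-b_0-\tfrac{\kv}{2}\log(2\pi)$; from \eqref{asympt logz-} we get for $Z_-$ that $b_2^{-}=0$, $b_1^{-}=c_1-b_1$, and $b_0^{-}=-b_0-\tfrac{\kv}{2}\log 2+c_2$. (Here one must be slightly careful: the expansion \eqref{asympt logz-} contains a spurious $\log z\sum a_k'z^{\mu_k}$ term, but this is absorbed into the $\sum a_k z^{\mu_k}+h_n(z)$ part of \eqref{defAE} since each $z^{\mu_k}\log z$ is itself $O(z^{\mu_k'})$ for any $\mu_k'$ slightly larger than $\mu_k$ but still less than $1$; so the $b$-constants are genuinely those listed.)

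Next I would assemble the product. Since $\Delta_{Z_\pm}(z)$ is just the Hadamard product of $Z_\pm$, and $Z_\pm$ is entire of order two with no zero forced at $z=0$ in general (the polynomial prefactor $e^{g(z)}$ being exactly what the expansion \eqref{defAE} encodes), we have $\Delta_{Z_+}(z)=Z_+(z)$ and $\Delta_{Z_-}(z)=Z_-(z)=\phi(z)Z_+(z)$ as meromorphic — in fact the relevant ratios are — functions; more precisely \eqref{D(z)} identifies $D_{Z_\pm}(z)$ with $e^{-(b_1^{\pm}z+b_0^{\pm})}$ times the completed zeta function itself. Multiplying,
\begin{align*}
D_{Z_+}(z)D_{Z_-}(z)
&= e^{-(b_1^{+}+b_1^{-})z-(b_0^{+}+b_0^{-})}\,Z_+(z)\,Z_-(z)\\
&= e^{-(b_1^{+}+b_1^{-})z-(b_0^{+}+b_0^{-})}\,\phi(z)\,Z_+(z)^2.
\end{align*}
Now substitute: $b_1^{+}+b_1^{-}=(-b_1)+(c_1-b_1)=c_1-2b_1$, so $-(b_1^{+}+b_1^{-})=2b_1-c_1$; and $b_0^{+}+b_0^{-}=\bigl(-b_0-\tfrac{\kv}{2}\log(2\pi)\bigr)+\bigl(-b_0-\tfrac{\kv}{2}\log 2+c_2\bigr)=-2b_0-\tfrac{\kv}{2}\log(4\pi)+c_2$, so $-(b_0^{+}+b_0^{-})=2b_0+\tfrac{\kv}{2}\log(4\pi)-c_2$. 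Finally replace $Z_+(z)$ by its definition $Z(z)/\bigl(G_1(z)(\Gamma(z-1/2))^{\kv}\bigr)$ from Definition~\ref{defCompleteZeta}. This yields exactly \eqref{main result}.

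The main obstacle is not in this bookkeeping but in justifying that Proposition~\ref{prop: Voros cont.} actually applies to $Z_+$ and $Z_-$ — i.e. that each is entire of order exactly two and that $\log\Delta_{Z_\pm}$ genuinely admits an expansion of the precise shape \eqref{defAE} with a remainder $h_n$ holomorphic and of the stated decay in a sector $|\arg z|<\theta<\pi$, not merely as $\Re(z)\to+\infty$ along the real axis. Entirety and order two follow because $Z(z)$, $G_1(z)$, $\Gamma(z-1/2)^{\kv}$ and $\phi(z)$ all have order $\le 2$ and the trivial zeros/poles of $Z$ cancel against those of $G_1(z)\Gamma(z-1/2)^{\kv}$ by construction of $\S\ref{complete zetas}$; the sectorial asymptotics require upgrading the real-axis bounds \eqref{eqSelZetaBound}, \eqref{asmPhi}, the Stirling expansion \eqref{gammaExpan} and the Barnes expansion \eqref{asmBarnes} to sectors $|\arg z|<\pi-\delta$, which is exactly what those stated results provide, and then combining them as in the proof of \cite[Thm.~6.2]{FJS16}. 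Since Proposition~\ref{prop:superzetas} has already established the meromorphic continuation of $\Z_\pm(s,z)$ to $s\in\CC$ regular at $s=0$ via precisely this verification, I would cite that proof for the hypothesis check and devote the body of the argument to the constant-chasing above, taking care to note that the values $b_2^{\pm}=0$ are consistent with $\Z_\pm(0,z)$ as computed in \eqref{zeta values at s=0} (the coefficient of $z^2$ there being $-h\,\vol(M)/2\pi=\widetilde a_2$, with no extra $b_2$ contribution), which serves as an internal consistency check on the sign conventions.
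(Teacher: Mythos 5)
Your proposal is correct and follows essentially the same route as the paper: apply Proposition~\ref{prop: Voros cont.} (formula \eqref{D(z)}) to the expansions \eqref{log z+} and \eqref{asympt logz-}, read off $D_{Z_+}(z)= \exp(b_1 z+b_0+\tfrac{\kv}{2}\log(2\pi))Z_+(z)$ and $D_{Z_-}(z)= \exp((b_1-c_1) z + b_0+\tfrac{\kv}{2}\log 2-c_2) Z_-(z)$, and multiply using $Z_-=\phi Z_+$. Your constant-chasing matches the paper's exactly, and your remarks on the sectorial validity of the expansions and the $z^{\mu_k}\log z$ terms address points the paper delegates to the proof of Proposition~\ref{prop:superzetas} and to \cite{FJS16}.
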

\begin{proof}
From the expansion \eqref{log z+} of $\log{Z_+(z)}$, applying Proposition \ref{prop: Voros cont.} we deduce
$$
D_+(z)= \exp(b_1 z+b_0+\tfrac{\kv}{2}\log(2\pi))Z_+(z).
$$
Analogously, the expansion \eqref{asympt logz-} together with Proposition \ref{prop: Voros cont.} yields
$$
D_-(z)= \exp\left((b_1-c_1) z + b_0+\tfrac{\kv}{2}\log2-c_2 \right) Z_-(z).
$$
Combining the expressions for $D_+$ and $D_-$ with definitions of $Z_+$ and $Z_-$ completes the proof.

\end{proof}

Definition \eqref{reg det sq defin} of the square of the regularized determinant is justified by the following functional relation between $D_+(1-z)D_-(1-z)$ and $D_+(z)D_-(z)$

\begin{thm} \label{prop:symmetric funct eq}
For constants $a(\chi)$ defined by \eqref{defAchi}, $b_1$ defined by \eqref{b1 def}, and $c_1$ from  \eqref{eqPhiA}, we have the following symmetric functional equation
$$
\exp(-(2b_1-c_1+2\log a(\chi))z)D_+(z)D_-(z)= \exp(-(2b_1-c_1+2\log a(\chi))(1-z))D_+(1-z)D_-(1-z).
$$
\end{thm}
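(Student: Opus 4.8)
The plan is to exploit the explicit formula \eqref{main result} for $D_+(z)D_-(z)=\mathrm{det}^2(\Delta-z(1-z)I)$ together with the known functional equations of the three objects that appear on its right-hand side: the Selberg zeta function $Z(z)$, the scattering determinant $\phi(z)$, and the combination $G_1(z)(\Gamma(z-1/2))^{\kv}$. Writing $F(z):=\exp(-(2b_1-c_1)z)\,D_+(z)D_-(z)$, Theorem~\ref{thm:main} gives
\begin{equation*}
F(z)= \exp\!\left(2b_0+\tfrac{\kv}{2}\log(4\pi)-c_2\right)\phi(z)\left(\frac{Z(z)}{G_1(z)(\Gamma(z-1/2))^{\kv}}\right)^2,
\end{equation*}
so the claim is equivalent to showing that $\exp(-2z\log a(\chi))F(z)$ is invariant under $z\mapsto 1-z$, i.e. that
\begin{equation*}
\frac{F(z)}{F(1-z)}= \exp\bigl(2(2z-1)\log a(\chi)\bigr) = a(\chi)^{2(2z-1)}.
\end{equation*}
So the whole theorem reduces to computing the ratio $F(z)/F(1-z)$ and checking it equals $a(\chi)^{2(2z-1)}$.

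The key steps, in order: (1) Use $\phi(s)\phi(1-s)=1$ from \eqref{functeq phi}, so the factor $\phi(z)/\phi(1-z)=\phi(z)^2$. (2) Invoke the functional equation of the Selberg zeta function for cofinite $\Gamma$ with unitary $\chi$ — this is the standard "symmetric" functional equation relating $Z(z)$ and $Z(1-z)$ via the scattering determinant $\phi$, an exponential factor built from $\vol(M)$, products of trigonometric/Gamma factors at the cusps (where $a(\chi)$ enters, cf.\ \cite{Fisher87} p.~71), and the elliptic/identity contributions which are exactly encoded by $G_1$; concretely $Z(1-z)=Z(z)\phi(z)^{-1}\cdot(\text{explicit factor})$. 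One can either cite Fisher \cite{Fisher87} directly or, more in the spirit of this paper, derive the needed relation by noting that $Z_-(z)=Z_+(z)\phi(z)$ and $N(Z_-)=1-N(Z_+)$ force $Z_-(z)$ and $Z_+(1-z)$ to have the same divisor, hence differ by $\exp(\text{linear})$; comparing the asymptotic expansions \eqref{log z+} and \eqref{asympt logz-} as $\Re(z)\to+\infty$ pins down that linear factor. (3) Handle $G_1(z)/G_1(1-z)$ and $\Gamma(z-1/2)/\Gamma(-z+1/2)$ using the reflection formulas for the Barnes $G$-function and the ordinary $\Gamma$-function (Barnes: $G(1-z)/G(1+z)$ relates to $(2\pi)^z$ times an integral of $\log$ of the sine; combined with $G(z+1)=\Gamma(z)G(z)$); the $\vol(M)$-power of $G(z+1)^2/\Gamma(z)$ in $G_1$ produces a clean exponential-of-quadratic which must cancel against the quadratic part hidden in the product, and the genus/cusp/elliptic exponents reassemble via Gauss–Bonnet \eqref{GaussBon}. (4) Assemble all factors; everything that is a genuine function of $z$ (the $Z$'s, $\phi$'s, $G$'s, $\Gamma$'s, sines) must cancel by construction, leaving only $\exp(\text{linear in }z)$, and one reads off that this linear factor is precisely $a(\chi)^{2(2z-1)}$ times the symmetric constant.

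An alternative, cleaner route that avoids re-deriving Fisher's functional equation: work directly with $D_\pm$. From the proof of Theorem~\ref{thm:main}, $D_+(z)=\exp(b_1z+b_0+\tfrac{\kv}{2}\log(2\pi))Z_+(z)$ and $D_-(z)=\exp((b_1-c_1)z+b_0+\tfrac{\kv}{2}\log2-c_2)Z_-(z)$. Since $Z_-(z)$ and $Z_+(1-z)$ share the same divisor (both have zero set $N(Z_-)=1-N(Z_+)$), the quotient $Z_-(z)/Z_+(1-z)$ is of the form $e^{Az+B}$ for constants $A,B$; likewise $Z_+(z)/Z_-(1-z)=e^{A(1-z)+B}$ by symmetry, with the \emph{same} $A,B$. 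Hence $D_+(z)D_-(z)/\bigl(D_+(1-z)D_-(1-z)\bigr)$ collapses to $\exp$ of a linear polynomial in $z$ whose coefficients are explicit in $b_1,c_1,A$ and $b_0,c_2,B$. To evaluate $A$ and $B$ one matches the large-$\Re(z)$ asymptotics: from \eqref{log z+}, $\log Z_+(1-z)$ has leading behavior $-h\tfrac{\vol(M)}{2\pi}z^2(\log z-\tfrac32)+\cdots$, identical to that of $\log Z_-(z)$ in \eqref{asympt logz-} up to lower-order terms, and subtracting gives $A=c_1-2b_1-\text{(something)}$ and pins $B$; the constant $a(\chi)$ appears precisely because comparing the sub-leading $O(z)$ and $O(1)$ terms requires the cusp data $\beta_{jp}$ through $b_1$ versus the functional equation constant. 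The main obstacle I anticipate is exactly this bookkeeping of constants — making sure the quadratic-in-$z$ parts cancel identically (they must, since both sides are entire of the same order with matching divisors) and then correctly identifying the surviving linear coefficient with $2\log a(\chi)$; this is where Fisher's computation on p.~71 of \cite{Fisher87}, or equivalently a careful reconciliation of \eqref{b1 def}, \eqref{b0}, \eqref{eqPhiA} and the reflection formulas, has to be brought in. None of the individual steps is deep, but the constant-chasing is error-prone and is the real content.
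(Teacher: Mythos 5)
Your primary route --- reduce to showing that $\exp(-2z\log a(\chi))F(z)$ is invariant under $z\mapsto 1-z$, dispose of the scattering factor via $\phi(z)\phi(1-z)=1$, and import the functional equation of the Selberg zeta function from Fisher --- is in substance what the paper does. The paper's specific mechanism is to rewrite Fisher's completed function $\Xi(z)$, which satisfies $\Xi(z)=\Xi(1-z)$ exactly, together with the resonance product $\mathcal{P}(z)$ and its functional relation involving $\phi$, in terms of $G_1$, and to read off
$$
\phi(1-z)\left(\frac{Z(1-z)}{G_1(1-z)\Gamma(1/2-z)^{\kv}}\right)^2=a(\chi)^{2(1-2z)}\,\phi(z)\left(\frac{Z(z)}{G_1(z)\Gamma(z-1/2)^{\kv}}\right)^2,
$$
which combined with \eqref{main result} gives the theorem. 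This bypasses your step (3): no reflection formulas for $G$ or $\Gamma$ are needed, because the identity, elliptic and parabolic gamma factors are exactly the ones Fisher already packaged into the symmetric $\Xi$. If you insist on quoting only the raw functional equation of $Z$ and then handling $G_1(z)/G_1(1-z)$ and $\Gamma(z-1/2)/\Gamma(1/2-z)$ by hand, the argument still works but the bookkeeping is substantially heavier than the paper's.

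The ``cleaner alternative'' has two genuine gaps. First, two entire order-two functions with the same divisor differ by $\exp(\text{polynomial of degree}\le 2)$, not by $\exp(\text{linear})$; ``same order with matching divisors'' does not kill the quadratic term ($e^{z^2}f(z)$ and $f(z)$ are a counterexample). The quadratic term can be killed, but only via the extra identity $p(z)+p(1-z)=\mathrm{const}$, which follows from $Z_-=\phi Z_+$ and \eqref{functeq phi}; you would need to supply that argument. Second, and more seriously, the surviving linear coefficient cannot be extracted by comparing \eqref{log z+} with \eqref{asympt logz-}: those expansions are valid as $\Re(z)\to+\infty$, whereas comparing $Z_-(z)$ with $Z_+(1-z)$ for large $\Re(z)$ requires the behavior of $Z_+$ as $\Re\to-\infty$, which is unavailable without the very reflection you are trying to prove (the input \eqref{eqSelZetaBound} on $\log Z$ degenerates in the left half-plane). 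Comparing the two right-half-plane expansions only recovers the relation $Z_-=\phi Z_+$, i.e.\ the constants $c_1,c_2$; it carries no information about $z\mapsto 1-z$ and in particular cannot produce $a(\chi)$. That constant genuinely comes from Fisher's trace-formula computation of the parabolic contribution, so the appeal to \cite{Fisher87} that you defer to at the end is not optional bookkeeping --- it is the entire content of the proof, and your alternative route does not avoid it.
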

\begin{proof} First, we will write the function $\Xi(z)$ introduced in the definition 2.1.4. on p. 115 of \cite{Fisher87} (with the weight $k=0$) in our notation:
\begin{multline} \label{def: Xi}
\Xi(z)=\exp\left(h\frac{\vol(M)}{2\pi} z(1-z)\right)\left(\frac{(2\pi)^z \Gamma(z)}{G(z+1)^2}\right)^{h\frac{\vol(M)}{2\pi}}\cdot Z(z)\cdot \\ \cdot \prod_{\{R\}_\Gamma}  d_R^{h(1-1/d_R)z}\left(\Gamma(z)\right)^{-h(1-1/d_R)}\prod_{m=0}^{d_R - 1} \Gamma\left(\frac{z+m}{d_R}\right)^{\alpha(R,m)/d_R} \cdot \\ a(\chi)^{-z}g_1^{-z}(z-1/2)(z-1/2)^{-\tfrac{1}{2} \mathrm{tr}(I_{\mathbf{k}} - \Phi(\tfrac{1}{2}))}\Gamma(z-1/2)^{-\mathbf{k}}\prod_{m=1}^M\left(1+\frac{z-\tfrac{1}{2}}{\sigma_m-\tfrac{1}{2}}\right)\mathcal{P}(z)^{-1}.
\end{multline}
The zeros $1-\sigma_m$ for $m=1\dots T$ of the scattering determinant $\phi$ are defined in \S\ref{ascatmatrix},  $M= \sum\limits_{i=1}^{T}q(\sigma_i)$ is the number of zeros $\sigma_m,$ counted with multiplicities and $\mathcal{P}(z)$ is a function depending on the resonances of $\phi$ which is given in  \cite[Definition 3.2.2, p. 118]{Fisher87}. In our notation (assuming $\mathbf{k}\neq 0,$ i.e. assuming $\chi$ is singular) we have
\begin{multline*}
\mathcal{P}(z) = \prod_{n=1}^N\left(1+\frac{z-\tfrac{1}{2}}{\rho_n-\tfrac{1}{2}}\right)^{-1}
\exp\left(-\frac{1}{2}\left(\frac{z-1/2}{\rho_n-1/2}\right)^2\right)
\cdot \\ \cdot \prod_{\rho}\left(1+\frac{z-\tfrac{1}{2}}{\rho-\tfrac{1}{2}}\right)^{-1}\left(1+\frac{z-\tfrac{1}{2}}{\overline{\rho}-\tfrac{1}{2}}\right)^{-1}
\exp\left(-\frac{1}{2}\left(\frac{z-1/2}{\rho-1/2}\right)^2-\frac{1}{2}\left(\frac{z-1/2}{\overline{\rho}-1/2}\right)^2\right),
\end{multline*}
where the zeros $\rho_i$, $i=1,\ldots,N$ and zeros $\rho$ are are defined in \S\ref{ascatmatrix}.

Using \cite[Formula 3.2.1]{Fisher87} we see that for $\kv\neq0,$ $\mathcal{P}(z)$  satisfies the functional relation
\begin{equation} \label{funct eq P}
\mathcal{P}(1-z)=g_1^{2z-1}\phi^{-1}(\tfrac{1}{2})\prod_{m=1}^M\frac{\sigma_m-z}{\sigma_m+z-1}\mathcal{P}(z) \phi(z),
\end{equation}
where $g_1$ is from \eqref{phiDirich}.

Comparing the definition \eqref{def: Xi} of $\Xi(z)$  with the definition \eqref{defn:g1} of  $G_1(z)$, we arrive at the following relation:
\begin{multline} \label{product RHS}
Z(z)G_1(z)^{-1}\Gamma(z-1/2)^{-\kv} \\ =\Xi(z)\exp(-h\frac{\vol(M)}{2\pi}z(1-z))a(\chi)^{z} \cdot(z-1/2)^{\tfrac{1}{2}\mathrm{tr}(I_\kv + \Phi(\tfrac{1}{2}))}g_1^z \prod_{m=1}^M\left(1+\frac{z-\tfrac{1}{2}}{\sigma_m-\tfrac{1}{2}}\right)^{-1}\mathcal{P}(z),
\end{multline}
Since $\Xi(z)=\Xi(1-z)$ and  $\phi^2(\tfrac{1}{2})=1$, taking the square of the expression \eqref{product RHS}, inserting $1-z$ instead of $z$ and applying the functional relation \eqref{funct eq P}, we deduce the following:
$$
\left( \frac{Z(1-z)}{G_1(1-z)(\Gamma(1/2-z))^{\kv}} \right)^2=a(\chi)^{2(1-2z)} \left( \frac{Z(z)}{G_1(z)(\Gamma(z-1/2))^{\kv}} \right)^2 \phi^2(z),
$$
which is equivalent to equation
$$
\phi(1-z)\left( \frac{Z(1-z)}{G_1(1-z)(\Gamma(1/2-z))^{\kv}} \right)^2=a(\chi)^{2(1-2z)}\phi(z) \left( \frac{Z(z)}{G_1(z)(\Gamma(z-1/2))^{\kv}} \right)^2.
$$
The above equation, together with \eqref{main result} yields that
\begin{multline*}
D_+(1-z)D_-(1-z)= \exp\left((2b_1-c_1)(1- z) + 2b_0+\tfrac{\kv}{2}\log(4\pi)-c_2 \right) a(\chi)^{2(1-2z)}\phi(z)\cdot\\ \cdot \left( \frac{Z(z)}{G_1(z)(\Gamma(z-1/2))^{\kv}} \right)^2,
\end{multline*}
which gives the relation
$$
D_+(1-z)D_-(1-z)= \exp((2b_1-c_1)(1- 2z))a(\chi)^{2(1-2z)}D_+(z)D_-(z).
$$
\end{proof}

\begin{rem}\rm
The expression \eqref{main result} can be compared with the results of \cite{Ef88-91} in the scalar setting ($h=1$) and the surface is torsion-free. Using the relation $$\Gamma_2(z)=(2\pi)^{z/2}\Gamma(z)(G(z+1))^{-1}$$ between the double gamma function $\Gamma_2(z)$ and the Barnes double gamma function $G(z+1),$ we can rewrite \eqref{main result} in the notation of \cite{Ef88-91} (where $h_1$ denotes the degree of singularity):
$$
\mathrm{det}^2(\Delta-z(1-z)I)= (2z-1)^{\tilde{A}}\exp\left(-\left[\tilde{B} (2z-1) + \tilde{C} \right] \right)  \phi(z)\cdot \left(Z(z) Z_\infty(z) \right)^2(\Gamma(z+1/2))^{-2h_1},
$$
for certain, explicitly computable constants $\tilde{A}$, $\tilde{B}$ and $\tilde{C}$.

This expression differs from the corrected formula for the square of the determinant of the Laplacian of \cite{Ef88-91} in constants $\tilde{A}$, $\tilde{B}$ and $\tilde{C}$ and, more importantly, in the factor $\phi(z)$.
(Curiously, the above expression differs from the erroneous formula \cite[Equation 1.7, page 445]{Ef88-91} only in constants $\tilde{A}$, $\tilde{B}$ and $\tilde{C}$.) This is due to a different definition of the regularized determinant of the Laplacian.

The fact that we have introduced the definition of the determinant of the Laplacian which encodes the natural symmetry $z\leftrightarrow 1-z$ shows that the approach to zeta regularization through the superzeta functions carrying the spectral information is better suited in in the presence of the continuous spectrum, than the trace formula approach employed in \cite{Ef88-91}, \cite{Sarnak} and many other more recent papers.
\end{rem}

Our approach immediately yields the following corollary which proves that the scattering determinant can be expressed as the regularized determinant of the superzeta function $\Z_-(s,z) - \Z_+(s,z)$, modulo the factor $\exp(c_1 z+c_2+\tfrac{\kv}{2}\log\pi)$.

\begin{cor} \label{thm:cont}
 For $z\in X_+\cap X_-$
\beq \label{phi expression}
\phi(z)= (\pi)^{\tfrac{\kv}{2}}e^{c_1 z+c_2} \exp\left(-\frac{d}{ds}\left.\left( \Z_-(s,z) - \Z_+(s,z)\right)\right|_{s=0} \right).
\eeq
\end{cor}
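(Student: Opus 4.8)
The plan is to combine the two expressions $D_+(z)=\exp(b_1 z+b_0+\tfrac{\kv}{2}\log(2\pi))Z_+(z)$ and $D_-(z)=\exp((b_1-c_1)z+b_0+\tfrac{\kv}{2}\log 2-c_2)Z_-(z)$, which are the intermediate identities established inside the proof of Theorem~\ref{thm:main}, and to form their quotient rather than their product. Since $\Z_-(s,z)-\Z_+(s,z)$ is again a superzeta-type object whose $s=0$ derivative exponentiates to $D_-(z)/D_+(z)$ (the regularized product of a quotient being the quotient of regularized products, a consequence of Definition~\ref{def: square of Lapl} applied with opposite signs), we get
\[
\exp\!\left(-\frac{d}{ds}\left.\left(\Z_-(s,z)-\Z_+(s,z)\right)\right|_{s=0}\right)=\frac{D_-(z)}{D_+(z)}.
\]

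Next I would divide the two displayed formulas for $D_\pm(z)$. The exponential prefactors combine to $\exp(-c_1 z + (\tfrac{\kv}{2}\log 2-\tfrac{\kv}{2}\log(2\pi))-c_2)=\exp(-c_1 z-c_2)\pi^{-\kv/2}$, using $\log 2-\log(2\pi)=-\log\pi$. The remaining factor is $Z_-(z)/Z_+(z)$, which by Definition~\ref{defCompleteZeta} is exactly $\phi(z)$, since $Z_-(z)=Z_+(z)\phi(z)$. Therefore
\[
\frac{D_-(z)}{D_+(z)}=\pi^{-\kv/2}e^{-c_1 z-c_2}\,\phi(z),
\]
and rearranging gives \eqref{phi expression}. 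One should note that the quotient $Z_-/Z_+$ is meromorphic and the $X_+\cap X_-$ restriction guarantees that both superzeta functions, hence the difference, are defined and regular at $s=0$ there, so the manipulation is legitimate.

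The only genuine subtlety — and the step I would flag as the main obstacle — is justifying that the regularized product of the \emph{difference} $\Z_--\Z_+$ behaves multiplicatively, i.e.\ that $D_{Z_-/Z_+}(z)=D_{Z_-}(z)/D_{Z_+}(z)$ when $Z_-/Z_+=\phi$ is not entire. Proposition~\ref{prop: Voros cont.} is stated for entire functions of order two, so strictly one cannot apply it directly to $\phi$. The clean way around this is purely formal at the level of $s$-derivatives: the map $f\mapsto -\frac{d}{ds}\Z_f(s,z)|_{s=0}$ is additive in the zero-counting data, and the zeros of $Z_-$ are the zeros of $Z_+$ together with those of $\phi$ while the poles of $Z_+$ and $Z_-$ coincide (the trivial data cancels); hence $\Z_--\Z_+$ genuinely enumerates $\{z-\rho\}$ over the resonances $\rho$ and nothing else, and its regularized product is $\phi(z)$ up to the explicit elementary factor $L(z)/(\sqrt\pi\,\Gamma(z-1/2)/\Gamma(z))^{\kv}=e^{c_1z+c_2}$ coming from \eqref{eqPhiA}. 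Spelling this out — either via the $L(z)H(z)$ decomposition and the asymptotic \eqref{asmPhi}, or simply by invoking the already-proved formulas for $D_\pm$ and dividing — is what makes the corollary rigorous; I would choose the latter route since those formulas are in hand.
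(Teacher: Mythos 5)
Your argument is correct and is essentially the paper's own proof: divide the two formulas for $D_\pm(z)$ obtained in the proof of Theorem~\ref{thm:main} (equivalently, compare the constant and linear asymptotic coefficients of $\log Z_+$ and $\log Z_-$ via \eqref{log z-}) and use $Z_-(z)/Z_+(z)=\phi(z)$. The ``subtlety'' you flag is not actually an obstacle: since $\Z_+(s,z)$ and $\Z_-(s,z)$ are each separately meromorphic and regular at $s=0$, the identity $\exp\bigl(-\tfrac{d}{ds}(\Z_--\Z_+)|_{s=0}\bigr)=D_{Z_-}(z)/D_{Z_+}(z)$ follows immediately from linearity of $d/ds$, with no need to interpret $\Z_--\Z_+$ as the superzeta function of the non-entire function $\phi$.
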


\begin{proof}

Utilizing the equation \eqref{log z-} and applying \eqref{D(z)} to both $Z_{-}(s),$ $Z_{+}(s)$ we see that
$$\frac{D_-(z)}{D_+(z)}=\exp\left(-\frac{\kv}{2}\log\pi - c_1z-c_2 \right)\frac{Z_-(z)}{Z_+(z)},$$
hence
$$
\phi(z)=\frac{Z_-(z)}{Z_+(z)}= \exp\left(\frac{\kv}{2}\log\pi + c_1z+c_2 \right)\frac{\exp\left(-\frac{d}{ds}\left.\Z_-(s,z)\right|_{s=0} \right)}{\exp\left(-\frac{d}{ds}\left. \Z_+(s,z)\right|_{s=0} \right)}.
$$
This proves \eqref{phi expression}.

\end{proof}

\appendix

\section{Alternate expressions}

The multiplicities $m_n$ of the trivial zeros of the Selberg zeta function carry important information related to the surface $M$ and the character $\chi$. For this reason, we will deduce a different expression for $m_n$  (see equation \eqref{mn simplified} below)  from which it will be obvious that $m_n$ are non-negative integers. Moreover, we will construct a different order-two entire function $\tilde G_1(s)$, given in terms of the gamma and the Barnes double gamma function (see equation \eqref{G1tilde defn} below) and such that its null set coincides with the set of negative integers $-n$ with multiplicities $m_n$.

\subsection{An alternate expression for the multiplicities $m_n$}
We simplify \eqref{EllipZero} by combining it with \eqref{GaussBon} multiplied by $\frac{h}{2\pi}$ and \eqref{eqDiagEig} to get

\begin{multline} \label{eqMn}
m_n
= h\left( 2g-2 + \csp + \elp \right)(2n+1) -  \sum_{j=1}^h \sum_{\{R\}_{\Gamma}} \frac{1}{d_R}  \left( 2n+1 + \sum_{k=1}^{d_R-1} \left(  \omega(R)^k_j \right) \frac{\sin\left(\frac{k\pi (2n+1)}{d_R}\right)}{\sin\left(\frac{k\pi}{d_R}\right)} \right)
\end{multline}

By \eqref{eqMn} we can focus on the case of unitary characters rather than unitary representations.

\begin{lem} \label{lemFloor1} Let $\omega$ be a unitary character of the finite cyclic group $\left< R \right>,$ where $R$ is elliptic of order $d.$ Further let $\omega(R)=\exp(2 \pi i q/d)$ for some integer $q,$ with $0 \leq q \leq d-1.$ Let $n \in \NN =  \{0,1,2,\dots\},$ then
\begin{equation}\label{sum elliptic}
\frac{1}{d}\left( 2n+1 + \sum_{k=1}^{d-1}\frac{\omega^k(R)}{\sin\left(\frac{k\pi}{d}\right)}\sin\left(\frac{k\pi(2n+1)}{d}\right)\right) =  \left| \{t \in \ZZ ~|~ td \in \{-n+q,\dots n+q\}  \} \right|,
\end{equation}
where $|A|$ denotes the cardinality of the finite set $A$.
\end{lem}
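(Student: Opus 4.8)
The plan is to reduce the trigonometric sum on the left-hand side of \eqref{sum elliptic} to the closed form already established in Lemma~\ref{lemFinEllSum}, and then to recognize the resulting rational expression as a lattice-point count that matches the right-hand side.

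First I would invoke Lemma~\ref{lemFinEllSum}, taking $\omega(R)$ in the role of the $d$th root of unity there (with $q$ and $d$ as given). This yields
$$\sum_{k=1}^{d-1}\frac{\omega^k(R)}{\sin\left(\frac{k\pi}{d}\right)}\sin\left(\frac{k\pi(2n+1)}{d}\right) = d - 1 - \bigl(q(n) + \tilde{q}(n)\bigr),$$
where $q(n)$ and $\tilde{q}(n)$ are the residues of $n+q$ and $n-q$ modulo $d$, respectively. Substituting this into the left-hand side of \eqref{sum elliptic} collapses it to $\frac{1}{d}\bigl(2n + d - q(n) - \tilde{q}(n)\bigr)$. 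Writing $2n = (n+q) + (n-q)$, this equals $\frac{(n+q) - q(n)}{d} + \frac{(n-q) - \tilde{q}(n)}{d} + 1$, and by the definition of the residues each of the two fractions is an integer, namely $\lfloor (n+q)/d\rfloor$ and $\lfloor (n-q)/d\rfloor$. Hence the left-hand side of \eqref{sum elliptic} equals $\lfloor (n+q)/d\rfloor + \lfloor (n-q)/d\rfloor + 1$.

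Next I would evaluate the right-hand side directly. The set $\{t \in \ZZ \mid td \in \{-n+q,\dots,n+q\}\}$ is precisely the set of integers $t$ with $q-n \le td \le q+n$, i.e. the set of multiples of $d$ lying in the closed interval $[q-n,\, q+n]$. The number of such multiples is $\lfloor (q+n)/d\rfloor - \lceil (q-n)/d\rceil + 1$; applying the identity $\lceil x\rceil = -\lfloor -x\rfloor$ turns this into $\lfloor (q+n)/d\rfloor + \lfloor (n-q)/d\rfloor + 1$, which is exactly the expression just obtained for the left-hand side. This would complete the proof.

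The argument is essentially bookkeeping and I expect no genuine obstacle; the only points requiring a little care are that $n - q$ can be negative (when $n < q$), so one should note that the identity relating the floor to the residue, and the identity $\lceil (q-n)/d\rceil = -\lfloor (n-q)/d\rfloor$, remain valid for negative arguments, and that the standard count of multiples of $d$ in a closed integer interval $[a,b]$ with $a \le b$ is correctly given by $\lfloor b/d\rfloor - \lceil a/d\rceil + 1$ (including the boundary cases where $a$ or $b$ is itself divisible by $d$, and the degenerate cases where the interval contains no such multiple, for which both sides of \eqref{sum elliptic} vanish). No input beyond Lemma~\ref{lemFinEllSum} and elementary properties of the floor and ceiling functions is needed.
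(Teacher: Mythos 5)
Your proof is correct, but it follows a genuinely different route from the paper's. The paper proves the identity directly and self-containedly: it expands the Dirichlet-kernel ratio as $\sin(k\pi(2n+1)/d)/\sin(k\pi/d)=\sum_{j=-n}^{n}e^{2\pi i kj/d}$, interchanges the order of summation, and recognizes the resulting complete exponential sum $\sum_{k=0}^{d-1}e^{2\pi ik(q+j)/d}$ as $d$ precisely when $d\mid(q+j)$, which immediately yields the count of multiples of $d$ in $\{q-n,\dots,q+n\}$ with no floor-function manipulation at all. You instead recycle Lemma~\ref{lemFinEllSum} to collapse the trigonometric sum to $d-1-(q(n)+\tilde q(n))$ and then translate the residues into floors, arriving at $\lfloor(n+q)/d\rfloor+\lfloor(n-q)/d\rfloor+1$; your identification of this with the lattice-point count via $\lfloor b/d\rfloor-\lceil a/d\rceil+1$ and $\lceil x\rceil=-\lfloor -x\rfloor$ is sound, including the degenerate cases you flag. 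A side benefit of your argument is that it simultaneously yields the closed form $\lfloor(n+q)/d\rfloor+\lfloor(n+d-q)/d\rfloor$ for the count, i.e.\ it subsumes Lemma~\ref{lemComb1}, which the paper proves separately by a case analysis; the cost is that your proof depends on Lemma~\ref{lemFinEllSum} (itself proved by an Abel-summation limit), whereas the paper's orthogonality argument for the present lemma is shorter and independent of it.
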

\begin{proof}
We apply the identity
$$
\frac{\sin(k\pi(2n+1)/d)}{\sin(k\pi/d)}= \sum_{j=-n}^{n}\exp(2\pi i k j/d)
$$
and obtain
\begin{multline}
\frac{1}{d}\left( 2n+1 + \sum_{k=1}^{d-1}\frac{\omega^k(R)}{\sin\left(\frac{k\pi}{d}\right)}\sin\left(\frac{k\pi(2n+1)}{d}\right)\right) = \frac{1}{d}(2n+1) + \sum_{k=1}^{d-1} \frac{1}{d} \exp(2 \pi i k q/d)\sum_{j=-n}^{n}\exp(2\pi i k j/d) \\ = \frac{1}{d}(2n+1) +  \frac{1}{d} \sum_{j=-n}^{n} \sum_{k=1}^{d-1} \exp(2 \pi i kq/d) \exp(2\pi i k j/d) = \frac{1}{d} \sum_{j=-n}^{n} \sum_{k=0}^{d-1} \exp(2 \pi i k (q+j)/d)
\end{multline}
The inner sum on the right is equal to $d$ iff $d | (q+j).$ As $j$ runs from $-n,-n+1,\dots,n,$ the number of such $j$ is equal to the number of multiples of $d$ in the integer range $-n+q,\dots,n+q.$ Recalling the factor $\tfrac{1}{d}$ out in front proves the lemma.
\end{proof}

\begin{lem} \label{lemComb1} Let $d,q,n$ be integers with $2 \leq d,$  $0 \leq q \leq d-1,$ and $0 \leq n.$

Then $$ \left| \{t \in \ZZ ~|~ td \in \{-n+q,\dots n+q\}  \} \right|  = \lfloor \frac{n+q}{d} \rfloor + \lfloor \frac{n+d-q}{d} \rfloor $$
Here $\lfloor x\rfloor$ denotes the \emph{floor} function.
\end{lem}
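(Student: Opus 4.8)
The claim is a purely elementary counting identity: the number of integer multiples of $d$ lying in the interval of integers $\{-n+q,\dots,n+q\}$ equals $\lfloor\tfrac{n+q}{d}\rfloor+\lfloor\tfrac{n+d-q}{d}\rfloor$. First I would reduce the count to a difference of floors. An integer $t$ satisfies $td\in\{-n+q,\dots,n+q\}$ exactly when $-n+q\le td\le n+q$, i.e. when $\tfrac{-n+q}{d}\le t\le\tfrac{n+q}{d}$; since $t\in\ZZ$ this is $\lceil\tfrac{-n+q}{d}\rceil\le t\le\lfloor\tfrac{n+q}{d}\rfloor$. Hence the cardinality is
$$
\left\lfloor\frac{n+q}{d}\right\rfloor-\left\lceil\frac{-n+q}{d}\right\rceil+1.
$$

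\textbf{Key step.} Now I would rewrite the ceiling term using the standard identity $\lceil x\rceil=-\lfloor-x\rfloor$, which gives $-\lceil\tfrac{-n+q}{d}\rceil=\lfloor\tfrac{n-q}{d}\rfloor$, so the count becomes $\lfloor\tfrac{n+q}{d}\rfloor+\lfloor\tfrac{n-q}{d}\rfloor+1$. It then remains to check the algebraic identity
$$
\left\lfloor\frac{n-q}{d}\right\rfloor+1=\left\lfloor\frac{n+d-q}{d}\right\rfloor,
$$
which is immediate from $\lfloor x+1\rfloor=\lfloor x\rfloor+1$ applied to $x=\tfrac{n-q}{d}$ together with $\tfrac{n-q}{d}+1=\tfrac{n+d-q}{d}$. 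Substituting back yields the asserted formula. Throughout, the hypotheses $2\le d$ and $0\le q\le d-1$ and $0\le n$ guarantee that all the quantities in the interval description make sense (in particular $n+d-q\ge n\ge 0$, so the second floor is nonnegative), though strictly speaking the floor identities hold for all real arguments and no case distinction is needed.

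\textbf{Main obstacle.} There is essentially no obstacle here — the only thing to be careful about is the bookkeeping with endpoints when converting the real inequality $\tfrac{-n+q}{d}\le t\le\tfrac{n+q}{d}$ into a count of integers, i.e. remembering the ``$+1$'' from counting lattice points in a closed interval and correctly pairing it with the conversion $\lceil x\rceil=-\lfloor-x\rfloor$. I would present the argument as the short chain of equalities above, with a one-line remark that $\lceil x\rceil = -\lfloor -x\rfloor$ and $\lfloor x+1\rfloor=\lfloor x\rfloor+1$ are the only facts used.
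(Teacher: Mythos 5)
Your proof is correct, and it takes a genuinely different route from the paper's. The paper proceeds by a case analysis on the position of the interval $\{-n+q,\dots,n+q\}$ relative to $0$: it introduces the counting function $f(a,b,d)$, proves the formula when $-n+q<0<n+q$ by splitting the interval at $0$, disposes of $n=0$ and of the case $0<-n+q$, $n+q<d$ directly, and reduces the remaining case $0<-n+q<d<n+q$ to the first one by translating the interval by $-d$. You instead get the identity in one uniform chain from the lattice-point count $\#\bigl([a,b]\cap\ZZ\bigr)=\lfloor b\rfloor-\lceil a\rceil+1$ together with $\lceil x\rceil=-\lfloor-x\rfloor$ and $\lfloor x+1\rfloor=\lfloor x\rfloor+1$; this avoids all case distinctions and in fact shows that the hypothesis $0\le q\le d-1$ is superfluous (only $d\ge 1$ and $n\ge 0$ are used), which sharpens the paper's closing remark that the lemma ``is false for arbitrary $n,q,d$'' — the genuine obstruction is $n<0$, not $q$ leaving $[0,d-1]$. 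The one point you should state explicitly rather than leave implicit is that the formula $\lfloor b\rfloor-\lceil a\rceil+1$ for the number of integers in $[a,b]$ is only valid when $a\le b$; here $a=\tfrac{-n+q}{d}\le\tfrac{n+q}{d}=b$ is exactly the place where $n\ge 0$ is needed. With that noted, the argument is complete.
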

\begin{proof}
For $a \leq b,$ both integers, define $f(a,b,d) = \left| \{t \in \ZZ ~|~ td \in \{a,\dots b\}  \} \right|$

Now, let $a,b$ be integers with $a < 0$ and and $b > 0.$  Then
\begin{multline} \label{eqFloor1}
f(a,b,d) = f(-a,0,d) +  f(0,b,d) - 1 = f(0,-a,d) + f(0,b,d) + 1 = \lfloor \frac{d-a}{d} \rfloor  +  \lfloor \frac{b}{d} \rfloor
\end{multline}

To prove the lemma, we consider the following cases. The first case, $a = -n+q < 0,$ and $b=n+q > 0$ follows from \eqref{eqFloor1}.

In the case when $n=0,$ the lemma is trivial.

Next, consider the case where $0 < -n + q.$
Since $q < d,$  and $n \geq 0,$ it follows that $0<-n+q < d.$ If $n+q < d,$ then it follows that
$$f(-n+q,n+q,d) = 0 =  \lfloor \frac{n+q}{d} \rfloor + \lfloor \frac{d + (n-q)}{d} \rfloor = 0 + 0, $$ and the lemma is verified in this case.

Finally we are left with the case $0 <-n+q < d,$ and $d < n+q.$  We shift the integer interval  $\{-n+q,\dots n+q\}$ to the left by $d,$ and obtain
$$\left| \{t \in \ZZ ~|~ td \in \{-n+q,\dots n+q\}  \} \right| = \left| \{t \in \ZZ ~|~ td \in \{-n+q-d,\dots n+q-d\}  \} \right|. $$
We can apply \eqref{eqFloor1} to the shifted interval and we obtain
\begin{multline}
 f(-n+q-d,n+q-d,d ) = \lfloor \frac{d-(-n+q-d)}{d} \rfloor  +  \lfloor \frac{n+q-d}{d} \rfloor  \\ = \lfloor 1 + \frac{n+d-q}{d} \rfloor  +  \lfloor -1 + \frac{n+q}{d} \rfloor = \lfloor  \frac{n+d-q}{d} \rfloor  +  \lfloor  \frac{n+q}{d} \rfloor,
\end{multline}
where the last equality follows because both $n+q$ and $n+d-q$ are positive.
\end{proof}
One should note that the above Lemma~\ref{lemComb1} is false for arbitrary $n,q,d.$

Combining \eqref{eqMn}, Lemma~\ref{lemFloor1}, and Lemma~\ref{lemComb1} we arrive at the following alternate expression for $m_n$:

\beq \label{mn simplified}
m_n = h\left( 2g-2 + \csp + \elp \right)(2n+1) - \sum_{\{R\}_{\Gamma}} \sum_{j=1}^h \left(  \lfloor \frac{n+q(R)_j}{d_R} \rfloor + \lfloor \frac{n+d_R-q(R)_j}{d_R} \rfloor  \right)
\eeq

\subsection{Double gamma function representation of the trivial zeros of the Selberg-zeta function}

Recall that the Barnes double Gamma function, which is an entire order two function defined by \eqref{BarnesDef} has a zero of multiplicity $n,$ at each point $-n \in \{-1,-2,\dots \}.$ We start with the following lemma.

\begin{lem} \label{lemBarnesRep}
 Let $d,q,n$ be integers with $2 \leq d,$  $0 \leq q \leq d-1,$ and $0 \leq n.$ Set $$g(n,q,d) = \lfloor \frac{n+q}{d} \rfloor + \lfloor \frac{n+d-q}{d} \rfloor. $$ For $s \in \CC,$ define
 $$G_{q,d}(s) = \prod_{m=0}^{d-1} G\left(\frac{s-q+m}{d} + 1 \right) G\left(\frac{s-(d-q)+m}{d} + 1 \right).$$
Then  $G_{q,d}(s)$ is an entire function with the set of zeros being the set of negative integers $\{-n:n = 1,2,\dots \}$ and each zero $s =-n$, $n = 1,2,\dots $ has multiplicity $g(n,q,d)$.
\end{lem}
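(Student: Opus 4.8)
The plan is to analyze the divisor of $G_{q,d}(s)$ by tracking, for each factor $G\bigl(\tfrac{s-q+m}{d}+1\bigr)$ and $G\bigl(\tfrac{s-(d-q)+m}{d}+1\bigr)$, the contribution to the zero at $s=-n$. Recall from \eqref{BarnesDef} that $G(w+1)$ is entire of order two with a zero of multiplicity $j$ at $w=-j$ for each $j\in\{1,2,\dots\}$ and no other zeros; in particular $G(w+1)$ has no zero when $w$ is a non-negative integer, and more generally when $w\notin\{-1,-2,\dots\}$. Since $G_{q,d}(s)$ is a finite product of entire functions it is entire, and it is of order two because each factor is a rescaling $w=(s+\text{const})/d$ of $G(w+1)$, which only rescales the order-two growth.

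First I would compute, for fixed $n\ge 1$, the order of vanishing of the first family of factors at $s=-n$. The factor $G\bigl(\tfrac{s-q+m}{d}+1\bigr)$ vanishes at $s=-n$ precisely when $\tfrac{-n-q+m}{d}$ is a negative integer, i.e. when $d\mid(n+q-m)$ and $n+q-m>0$; in that case the multiplicity is $\tfrac{n+q-m}{d}$. As $m$ ranges over $\{0,\dots,d-1\}$ exactly one residue $m\equiv n+q\pmod d$ occurs, so the total contribution of the first family is $\tfrac{n+q-m_0}{d}$ where $m_0\in\{0,\dots,d-1\}$ is that residue — and this equals $\lfloor\tfrac{n+q}{d}\rfloor$ (when $n+q-m_0>0$; when $n+q-m_0=0$, i.e. $n+q<d$ so $n<d$, both sides are $0$, consistent since then $\lfloor\tfrac{n+q}{d}\rfloor=0$). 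Symmetrically, the second family $\prod_{m=0}^{d-1}G\bigl(\tfrac{s-(d-q)+m}{d}+1\bigr)$ contributes $\lfloor\tfrac{n+(d-q)}{d}\rfloor=\lfloor\tfrac{n+d-q}{d}\rfloor$ at $s=-n$. Adding the two gives $g(n,q,d)$, as claimed.

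It remains to check two things: that these are the \emph{only} zeros (nothing at $s=0$ or at positive integers or off the real axis), and that $g(n,q,d)$ is indeed the honest multiplicity, i.e. $g(n,q,d)\ge 1$ for all $n\ge 1$. For the first point, a zero of $G_{q,d}$ forces some argument $\tfrac{s-q+m}{d}+1$ or $\tfrac{s-(d-q)+m}{d}+1$ to lie in $\{0,-1,-2,\dots\}$, which forces $s$ to be a non-positive integer; and at $s=0$ the relevant arguments $\tfrac{-q+m}{d}+1$ and $\tfrac{-(d-q)+m}{d}+1$ for $m\in\{0,\dots,d-1\}$ lie in $\{1-\tfrac{q}{d},\dots\}$ and one checks each is a \emph{positive} integer only when it equals exactly $1$, i.e. $m=q$ resp.\ $m=d-q$, where $G(1)=1\ne 0$ — so $s=0$ is not a zero, matching $g(0,q,d)=0$. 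For the second point, since $0\le q\le d-1$ we have $n+q\ge n\ge 1$ and $n+d-q\ge n+1\ge 2$, so for $n\ge d$ already $\lfloor\tfrac{n+d-q}{d}\rfloor\ge 1$; for $1\le n\le d-1$ one splits on whether $n+q\ge d$ (then the first floor is $\ge1$) or $n+q<d$ (then $n+d-q = d+(n-q)$; if $n\ge q$ the second floor is $\ge 1$, and if $n<q$ then $n+q$ could still be $<d$, giving $g=0$ — but that is exactly the case $n<q\le d-1$ and one must double-check whether $m_n=0$ is actually attained, which is consistent with Lemma~\ref{lemComb1} allowing $g(n,q,d)=0$ for small $n$). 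The main obstacle is precisely this bookkeeping at small $n$ and the boundary residue cases ($m_0=0$, or $n+q$ an exact multiple of $d$), where "floor" versus "one fewer" must be reconciled; I would handle it by invoking Lemma~\ref{lemComb1}, which already identifies $g(n,q,d)$ with the cardinality $\bigl|\{t\in\ZZ: td\in\{-n+q,\dots,n+q\}\}\bigr|$, and then observe that this cardinality counts exactly the combined multiplicity extracted above, so no separate small-$n$ analysis is needed beyond citing that lemma.
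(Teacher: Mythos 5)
Your proof is correct and follows essentially the same route as the paper: for each $n\ge 1$, exactly one factor in each of the two families has an integer argument at $s=-n$, and its order of vanishing is the corresponding floor, giving $\lfloor\tfrac{n+q}{d}\rfloor+\lfloor\tfrac{n+d-q}{d}\rfloor$. You are in fact somewhat more careful than the paper, whose proof writes $\lfloor\tfrac{n+q-d}{d}\rfloor$ (a typo for $\lfloor\tfrac{n+d-q}{d}\rfloor$) for the second family and does not flag the boundary cases you address explicitly, namely $n+q$ an exact multiple of $d$ and the possibility that $g(n,q,d)=0$ for small $n$ (so that, strictly speaking, such $-n$ is not a zero at all).
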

\begin{proof}
We study $G_{q,d}(s)$ at $s = -n.$ Since $m \in \{0,\dots,d-1\},$  the number $\frac{-n-q+m}{d}$ is a negative integer for exactly one value, say $m = m_1,$ in which case
$$\lfloor \frac{-n -q +m_1}{d} \rfloor  = -\lfloor \frac{n+q}{d} \rfloor .$$

 Therefore, $\prod_{m=0}^{d-1} G\left(\frac{s-q+m}{d}+1\right)$ has a zero of order $\lfloor \frac{n+q}{d} \rfloor$ at $s = -n$ for any positive integer $n$. Similarly, $\prod_{m=0}^{d-1}G\left(\frac{s-(d-q)+m}{d} + 1 \right)$ has a zero of order $\lfloor \frac{n+q-d}{d} \rfloor$ at $s = -n.$

Moreover, $\frac{s-q+m}{d} = -k $ for some $k = 1,2,3,\dots,$ iff $s = -kd + q - m$ is a negative integer. This shows that there are no zeros of $\prod_{m=0}^{d-1} G\left(\frac{s-q+m}{d}+1\right)$ different from negative integers.  A similar conclusion holds for $\prod_{m=0}^{d-1}G\left(\frac{s-(d-q)+m}{d} + 1 \right)$ and the proof is complete.
\end{proof}

Recall that $\{R\}_{\Gamma}$ are the classes of elliptic elements of $\Gamma$ and that there are $\elp$ of them. Further, recall the notation $\csp,h.$ Recall that $\omega(R)_j,$ for $j=1\dots h$ are the eigenvalues (and $d_R\mathrm{th}-$roots of unity) of $\chi(R) $, and that $q(R)_j,$ with $0 \leq q(R)_j \leq d_R-1$ is defined by \eqref{def:q(R)j}.

\begin{defn} \label{defGamE} With the notation above we define
$$G_E(s) =   \prod_{\{R\}_{\Gamma}} \prod_{j=1}^{h} \prod_{m=0}^{d_R - 1} G\left(\frac{s-q(R)_j+m}{d_R} + 1 \right) G\left(\frac{s-\left(d_R-q(R)_j\right)+m}{d_R} + 1 \right).  $$
and
\begin{equation} \label{G1tilde defn}
\tilde G_1(s)= \left( G_E(s) \right)^{-1}\left(\frac{(2\pi)^{-s} (G(s+1))^2}{\Gamma(s)}\right)^{h(2g-2+\csp + \elp)}
\end{equation}
\end{defn}

\begin{lem}
The function $\tilde G_{1}(s)$ is a entire  of order two with zeros at points
$-n \in -\NN$ and corresponding multiplicities  $m_n.$
\end{lem}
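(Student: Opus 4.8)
The plan is to verify the two asserted properties of $\tilde G_1(s)$ separately: first, that it is entire of order two, and second, that its zeros occur exactly at the non-positive integers $-n$ with multiplicity $m_n$ as given by the simplified formula \eqref{mn simplified}. For the order-two claim, I would observe that $\tilde G_1(s)$ is built from finitely many factors, each of which is a composition of the Barnes double gamma function $G(\cdot+1)$ (entire of order two, by \eqref{BarnesDef}) or the ordinary gamma function with an affine substitution $s\mapsto (s+c)/d$ with $d>0$. Such affine rescalings preserve entireness and do not change the order, and finite products of order-two entire functions have order at most two. Since $G_E(s)$ appears with exponent $-1$, one must check that $\tilde G_1$ is genuinely entire, i.e. that the zeros of $G_E(s)$ are cancelled or that the quotient has no poles: in fact $G(s+1)$ is entire with no poles, so $\left(G_E(s)\right)^{-1}$ has poles at the negative integers, but these are compensated — or rather, the combination is arranged so that $\tilde G_1$ has \emph{zeros}, not poles — which is precisely the content of the multiplicity computation below; the factor $\left((2\pi)^{-s}(G(s+1))^2/\Gamma(s)\right)^{h(2g-2+\csp+\elp)}$ contributes zeros of order $h(2g-2+\csp+\elp)(2n+1)$ at $s=-n$ (using that $G(s+1)^2$ has a zero of order $2n$ at $-n$ and $1/\Gamma(s)$ has a zero of order $1$ there, giving $2n+1$), and these dominate, so after subtracting the contribution of $\left(G_E(s)\right)^{-1}$ one still gets a zero provided $m_n\ge 0$, which \eqref{mn simplified} makes manifest.

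For the zero/multiplicity count, I would compute the order of vanishing of $\tilde G_1(s)$ at an arbitrary point. Away from $-\NN$: the gamma and Barnes gamma factors with affine arguments $(s+c)/d$ vanish or blow up only when $(s+c)/d$ is a non-positive integer, i.e. when $s$ is itself an integer $\le$ some bound; a short check shows no such $s$ lies outside $-\NN$ can be a zero or pole, so $\tilde G_1$ is non-vanishing and holomorphic on $\CC\setminus(-\NN)$. At $s=-n$ with $n\in\NN$: by the elementary pole/zero orders of $G(s+1)$ and $\Gamma(s)$ recalled in \S\ref{secGamma} and \S\ref{secBarnes}, the first factor contributes a zero of order $h(2g-2+\csp+\elp)(2n+1)$. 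For $G_E(s)$, I invoke Lemma~\ref{lemBarnesRep}: with $d=d_R$ and $q=q(R)_j$, the factor $\prod_{m=0}^{d_R-1}G\!\left(\frac{s-q(R)_j+m}{d_R}+1\right)G\!\left(\frac{s-(d_R-q(R)_j)+m}{d_R}+1\right)$ equals $G_{q(R)_j,d_R}(s)$ in the notation of that lemma, which has a zero of order $g(n,q(R)_j,d_R)=\lfloor\frac{n+q(R)_j}{d_R}\rfloor+\lfloor\frac{n+d_R-q(R)_j}{d_R}\rfloor$ at $s=-n$. Hence $G_E(s)$ has a zero of order $\sum_{\{R\}_\Gamma}\sum_{j=1}^h\left(\lfloor\frac{n+q(R)_j}{d_R}\rfloor+\lfloor\frac{n+d_R-q(R)_j}{d_R}\rfloor\right)$ there, and $\left(G_E(s)\right)^{-1}$ a pole of that order. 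Combining, the order of $\tilde G_1$ at $s=-n$ is exactly
$$
h(2g-2+\csp+\elp)(2n+1)-\sum_{\{R\}_{\Gamma}}\sum_{j=1}^h\left(\Big\lfloor\tfrac{n+q(R)_j}{d_R}\Big\rfloor+\Big\lfloor\tfrac{n+d_R-q(R)_j}{d_R}\Big\rfloor\right),
$$
which is precisely $m_n$ by the alternate expression \eqref{mn simplified}. Since the right-hand side is a non-negative integer (as is clear from that form, each summand being bounded by counting multiples of $d_R$), $\tilde G_1$ is indeed holomorphic at $-n$ with a genuine zero of that order, confirming entireness as well.

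The main obstacle I anticipate is purely bookkeeping: correctly matching the double-product structure in Definition~\ref{defGamE} to the hypotheses of Lemma~\ref{lemBarnesRep}, in particular checking that $q(R)_j$ always satisfies $0\le q(R)_j\le d_R-1$ (which holds by \eqref{def:q(R)j}) so that the lemma applies verbatim with $q=q(R)_j$ and $q=d_R-q(R)_j$ — note $d_R-q(R)_j$ may equal $d_R$ when $q(R)_j=0$, so one should either treat that degenerate case directly (where $\omega(R)_j=1$ and the floor sum simplifies) or observe the lemma still gives the correct count. A secondary point requiring care is confirming that no \emph{unwanted} zeros or poles of $\tilde G_1$ appear off $-\NN$: the shifted arguments $\frac{s-q(R)_j+m}{d_R}+1$ and $\frac{s-(d_R-q(R)_j)+m}{d_R}+1$ hit a non-positive integer only when $s$ is a negative integer (since $q(R)_j,m$ are integers and $d_R\mid$ the relevant differences only at such $s$), so this is straightforward but should be stated. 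Everything else is a direct appeal to the divisor data of $G$ and $\Gamma$ already recorded in Sections~\ref{secGamma}--\ref{secBarnes} together with the combinatorial identity \eqref{mn simplified}.
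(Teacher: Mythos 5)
Your proposal is correct and follows essentially the same route as the paper: the divisor of $\bigl((2\pi)^{-s}(G(s+1))^2/\Gamma(s)\bigr)^{h(2g-2+\csp+\elp)}$ gives the term $h(2g-2+\csp+\elp)(2n+1)$, Lemma~\ref{lemBarnesRep} gives the order of $G_E$ at $s=-n$, and the difference is $m_n$ by the alternate expression \eqref{mn simplified} (itself the combination of \eqref{eqMn} with Lemmas~\ref{lemFloor1} and~\ref{lemComb1} that the paper cites). One small simplification: each $(R,j)$-factor of $G_E$ is already the full $G_{q(R)_j,d_R}(s)$ of Lemma~\ref{lemBarnesRep}, so the lemma is applied once per factor with $q=q(R)_j$ and no separate case for $q=d_R-q(R)_j$ is needed.
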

\begin{proof}
The function $\left(\frac{(2\pi)^{-s} (G(s+1))^2}{\Gamma(s)}\right)^{h(2g-2+\csp + \elp)}$ possesses zeros at points $s=-n$ with multiplicity $h(2g-2+\csp + \elp)(2n+1)$, hence the statement follows by combining equation \eqref{eqMn} with  Lemmas~\ref{lemFloor1}, \ref{lemComb1} and \ref{lemBarnesRep}.
\end{proof}


\vspace{5mm}

\noindent
Joshua S. Friedman \\
Department of Mathematics and Science \\
\textsc{United States Merchant Marine Academy} \\
300 Steamboat Road \\
Kings Point, NY 11024 \\
U.S.A. \\
e-mail: FriedmanJ@usmma.edu, joshua@math.sunysb.edu, CrownEagle@gmail.com

\vspace{5mm}
\noindent
Jay Jorgenson \\
Department of Mathematics \\
The City College of New York \\
Convent Avenue at 138th Street \\
New York, NY 10031
U.S.A. \\
e-mail: jjorgenson@mindspring.com

\vspace{5mm}

\noindent
Lejla Smajlovi\'c \\
Department of Mathematics \\
University of Sarajevo\\
Zmaja od Bosne 35, 71 000 Sarajevo\\
Bosnia and Herzegovina\\
e-mail: lejlas@pmf.unsa.ba
\end{document}